\newtheorem{thm}{Theorem}[section]
\newtheorem{pro}[thm]{Proposition}
\newtheorem{cor}[thm]{Corollary}
\newtheorem{lem}[thm]{Lemma}
\newtheorem{example}[thm]{Example}
\newtheorem{defn}[thm]{Definition}
\newcommand{\ceil}[1]{\left\lceil #1\right\rceil}
\begin{document}

\author{Mark Herman and Jonathan Pakianathan}
\title{A note on the unit distance problem for planar configurations with $\mathbb{Q}$-independent direction set}
\maketitle

\begin{abstract}
Let $T(n)$ denote the maximum number of unit distances that a set of $n$ points in the Euclidean plane $\mathbb{R}^2$ can determine with the additional
condition that the distinct unit length directions determined by the configuration must be $\mathbb{Q}$-independent.
This is related to the Erd\"os unit distance problem but with a simplifying additional assumption on the direction set which holds ``generically''.

We show that $T(n+1)-T(n)$ is the Hamming weight of $n$, i.e., the number of nonzero binary coefficients in the binary expansion of $n$, and find a formula for $T(n)$ explicitly. In particular $T(n)$  is $\Theta(n log(n))$. Furthermore we describe a process to construct a set of $n$ points in the plane with $\mathbb{Q}$-independent
unit length direction set which achieves exactly $T(n)$ unit distances.
In the process of doing this, we show $T(n)$ is also the same as the maximum number of edges a subset of vertices of size $n$ determines in either the
countably infinite lattice $\mathbb{Z}^{\infty}$ or the infinite hypercube graph $\{0,1\}^{\infty}$.

The problem of determining $T(n)$ can be viewed as either a type of packing or isoperimetric problem.

\noindent
{\it Keywords: Unit distance problem, discrete combinatorics, isoperimetric problems}.

\noindent
2010 {\it Mathematics Subject Classification.}
Primary: 05D99, 52C10;
Secondary: 05C35, 52C35.
\end{abstract}


\section{Introduction}

Erd\"os posed the following question \cite{Erd46}: What is the maximum possible number $u(n)$ of unit distances determined by an $n$-point set in the Euclidean plane?
Erd\"os conjectured that $u(n)=O(n^{1+\epsilon})$ for any $\epsilon >0$ but the best that is currently known is that $u(n) = O(n^{\frac{4}{3}})$, see for example
\cite{SST84}, \cite{CEG90}, \cite{AS02} and \cite{Sze97}.

Examples are also known (see \cite{BMP05} and in fact some are also described in this paper) that show that $u(n) = \Omega(n \log n)$.
Any upper bound for the Erdos unit distance conjecture would give a lower bound on the number of distinct distances determined by
$n$ points in the Euclidean plane by simple pigeonholing. The hence related Erd\"os distinct distance conjecture, recently proven by Guth and Katz \cite{GK10}, says
that the number of distinct distances determined by $n$ points in the Euclidean plane is at least $\Omega(\frac{n}{\log(n)})$.

It follows from some recent work of J. Matou\u sek \cite{Mat11} that if a set of $n$ points, $n \geq 4$, in the plane determines at least $Cn \log(n) \log\log(n)$ unit distances then
there must be some integer dependencies within its unit distance set, i.e. some achieved unit difference vector is a $\mathbb{Z}$-linear combination of others. This indicates that configurations that are extremal for the Erd\"os unit distance problem
should contain lots of integral (and hence rational) dependencies in its unit distance set. In particular, it follows from Matou\u sek's work that if
a set of $n$ points has rationally independent distinct unit distances, then it determines no more than $O(n \log(n) \log\log(n))$ unit distances.

In this note, we will study these types of sets exclusively. More precisely note that if $A$ is a finite subset of the Euclidean plane $\mathbb{R}^2$ then
$A$ determines a unit vector set $U_{A} = \{ x-y | x,y \in A, \| x-y \|=1 \} \subset S^1$ where $S^1$ is the unit circle. Note that $U_{A}$ is symmetric i.e. if $u \in U_{A}$ so is $-u$.
Choose a representative of each direction represented in $U_{A}$ which
has principal argument (in radians) in the interval $[0,\pi)$ to form the unit direction set $D_{A}= \{ x-y | x,y \in A, \| x-y \|=1, Arg(x-y) \in [0,\pi) \} \subset S^1$.
Thus $U_A$ is the disjoint union of $D_A$ and $-D_A$.

We say that $A$ has rationally independent unit direction set if the elements of $D_A$ are independent over $\mathbb{Q}$, where $\mathbb{Q}$ is the field of
rational numbers. We show in this paper that most point configurations have rationally independent unit direction set and that this condition is generic in a sense.
Note this condition imposes no constraint on differences which are not of unit length nor does it preclude the possibility of a given unit direction vector occurring
multiple times in the difference set of $A$, it only imposes the condition of $\mathbb{Q}$-indepedence on the {\bf distinct} unit distances as determined in $D_A$.

We then ask the following question: What is the maximum possible number $T(n)$ of unit distances determined by an $n$-point set $A$ in the
Euclidean plane with rationally independent unit direction set?

In this paper we find an explicit exact formula for $T(n)$ and describe a process of constructing a configuration of $n$-points in the plane with rationally
independent unit direction set that achieves $T(n)$ unit distances.

\begin{thm}
Let $n \geq 1$ then $T(n)$, the maximum number of unit distances that a $n$-point set of the Euclidean plane with rationally independent unit direction set determines
is also equal to: \\
(a) The maximum number of edges determined by $n$ points in the standard countably infinite integer lattice $\mathbb{Z}^{\infty}$. \\
(b) The maximum number of edges determined by $n$ points in the standard countably infinite hypercube graph $\{0,1\}^{\infty}$. \\
Furthemore $T(n+1)-T(n)=H(n)$ where $H(n)$ is the Hamming weight of $n$ i.e., the number of nonzero binary coefficients in the binary expansion of
$n$. If $n=\sum_{j=1}^t 2^{k_j}$ where $k_1 > k_2 > \dots > k_t \geq 0$ then $T(n)=\sum_{j=1}^t (k_j 2^{k_j-1} + (j-1)2^{k_j})$ and in particular one always has
$$
\frac{n}{4}(\ceil{log n} -1) < T(n) < n \ceil{log n}
$$
where $log$ is the base 2 logarithm and $\ceil{x}$ is the smallest integer greater or equal to $x$. Thus $T(n)$ is big-Theta of $n log n$.
\end{thm}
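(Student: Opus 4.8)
The plan is to pass from the planar problem to a purely combinatorial one about the standard Cayley graph of $\mathbb{Z}^{\infty}$, solve the latter, and translate back. Let $A\subseteq\mathbb{R}^2$ be finite with $\mathbb{Q}$-independent unit direction set $D_A=\{v_1,\dots,v_m\}$ and let $G$ be its unit-distance graph. Fixing a base point in each component of $G$, define a lift $a\mapsto\tilde a\in\mathbb{Z}^m$ by summing the signed basis vectors $\pm e_i$ encountered along a $G$-path from the base point to $a$ (an edge realizing $\pm v_i$ contributing $\pm e_i$). Around any cycle of $G$ the signed sum of the corresponding $v_i$ is $0$ in $\mathbb{R}^2$, so $\mathbb{Q}$-independence (equivalently $\mathbb{Z}$-independence) of the $v_i$ forces the signed sum of the $e_i$ to be $0$ in $\mathbb{Z}^m$; hence $\tilde a$ is well defined, and since the linear map $e_i\mapsto v_i$ sends $\tilde a$ back to $a$ up to translation, $a\mapsto\tilde a$ is injective on each component and carries unit distances to lattice edges. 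Translating the components far apart in a fresh coordinate makes it a global injective graph map $A\hookrightarrow\mathbb{Z}^{\infty}$, so the number of unit distances of $A$ is at most the number of edges $n$ vertices of $\mathbb{Z}^{\infty}$ can span. Conversely, given $n$ vertices of $\mathbb{Z}^{\infty}$ (all lying in some $\mathbb{Z}^m$) spanning $E$ edges, realize them via $e_i\mapsto v_i$ for a generic choice of unit vectors $v_1,\dots,v_m$; genericity---exactly the sort of statement established earlier in the paper---guarantees that the $v_i$ are $\mathbb{Q}$-independent, that the $n$ images are distinct, and that no difference other than $\pm v_i$ has unit length, so the image is a legal configuration with precisely $E$ unit distances. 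Thus $T(n)$ equals the lattice edge-maximum, which is (a).

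For (b), since $\{0,1\}^{\infty}$ is a subgraph of $\mathbb{Z}^{\infty}$, cube configurations are lattice configurations, so the cube maximum is at most the lattice maximum. For the reverse, put a finite $S\subseteq\mathbb{Z}^{\infty}$ inside a box $\prod_i\{0,1,\dots,k_i-1\}$; the initial segment of the reflected binary Gray code embeds the path on $k_i$ vertices as a subgraph of $\{0,1\}^{\ceil{log k_i}}$, and the Cartesian product of these path-embeddings is an injective graph map of the box (with its grid structure) into a finite cube. Restricting to $S$ sends distinct points to distinct points and each grid edge to a cube edge, so $n$ vertices of $\{0,1\}^{\infty}$ can span at least as many edges as $n$ vertices of $\mathbb{Z}^{\infty}$; hence the two maxima agree and equal $T(n)$.

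Write $g(n)$ for this common maximum and $F(n)=\sum_{m=0}^{n-1}H(m)$. For the lower bound I build configurations $S_n\subseteq\{0,1\}^{\infty}$ recursively: $S_1$ is a point, and for $2^k\le n<2^{k+1}$ let $S_n$ be a full $k$-dimensional subcube together with a translate of $S_{n-2^k}$ in a parallel subcube one coordinate away (legitimate because $|S_{n-2^k}|<2^k$). The $S_n$ are nested, and a short induction---the vertex joining $S_n$ to $S_{n+1}$ has exactly $H(n)$ neighbours among the earlier vertices, one across the new coordinate and $H(n-2^k)$ inside the smaller cube by induction---shows $e(S_n)=F(n)$, so $g(n)\ge F(n)$. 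For the upper bound, take an extremal $S$ with $|S|=n\ge 2$, split it along any non-constant coordinate into pieces of sizes $p,q\ge 1$, and observe that its edges split into edges inside each piece plus at most $\min(p,q)$ crossing edges, whence $g(n)\le g(p)+g(q)+\min(p,q)$. Combining this with the induction hypothesis and the inequality
$$
F(p)+F(q)+\min(p,q)\le F(p+q)\qquad(p,q\ge 0),
$$
which holds with equality whenever $\max(p,q)$ is a power of $2$, gives $g(n)\le F(n)$. Hence $T(n)=g(n)=F(n)$, so $T(n+1)-T(n)=H(n)$; summing Hamming weights in the blocks $[2^k,2^{k+1})$ gives the identity $F(n)=F(2^{k})+(n-2^{k})+F(n-2^{k})$ for $2^k\le n<2^{k+1}$, and unrolling it for $n=\sum_{j=1}^t 2^{k_j}$ with $k_1>\dots>k_t$ (using $F(2^k)=k2^{k-1}$) yields the stated closed form $T(n)=\sum_{j=1}^t\bigl(k_j2^{k_j-1}+(j-1)2^{k_j}\bigr)$.

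Finally, the bounds follow from this formula: since $H(m)\le\ceil{log n}$ for $m\le n-1$ we get $T(n)=\sum_{m=1}^{n-1}H(m)\le(n-1)\ceil{log n}<n\,\ceil{log n}$, and since $F$ is nondecreasing, $T(n)\ge F(2^k)=k\,2^{k-1}>\frac{n}{4}(\ceil{log n}-1)$ where $k=\lfloor log n\rfloor\ge\ceil{log n}-1$ and $2^k>n/2$; together these give $T(n)=\Theta(n\,log n)$. I expect the displayed inequality $F(p)+F(q)+\min(p,q)\le F(p+q)$---which is exactly the edge-isoperimetric inequality for the hypercube---to be the main obstacle. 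I would prove it by strong induction on $\max(p,q)$, using the carry identity $H(x+y)=H(x)+H(y)-(\text{number of carries in }x+y)$ to reduce each of the four parity cases for $(p,q)$ to strictly smaller instances; the delicate point is the boundary case where $p=q$ is odd, which survives only because an extra $+1$ appears in the reduction precisely when $q$ is odd.
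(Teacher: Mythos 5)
Your argument is correct in outline but takes a genuinely different route from the paper at the two combinatorial stages. For the reduction from $\mathbb{Z}^{\infty}$ to $\{0,1\}^{\infty}$, the paper repeatedly folds the top layer of a coordinate into a fresh $0/1$ coordinate (Lemma~\ref{lem: Zd to Zd+1}), an edge-preserving bijection that lowers $M(V)$ by one each round; your Gray-code embedding of each coordinate path into a small cube achieves the same thing in one step and is arguably slicker, though it only gives an edge-nondecreasing injection rather than an exact edge count (which is all that is needed). The bigger divergence is at the heart of the matter: the paper proves the full structural statement (Theorem~\ref{thm: 2 cube}) that the initial segment $\{v_0,\dots,v_{n-1}\}$ in binary order is edge-maximal, via a $2\times 2$ block compression induction, and only afterwards counts edges. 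You bypass the structural result entirely and work numerically: the splitting bound $g(n)\le g(p)+g(q)+\min(p,q)$ together with the superadditivity $F(p)+F(q)+\min(p,q)\le F(p+q)$ of $F(n)=\sum_{m<n}H(m)$. This is essentially Hart's route (one of the papers cited in \cite{Hart76}) rather than the authors'; it buys a shorter path to the numerical value $T(n)$ at the cost of not identifying the extremal configurations, while the paper's compression argument gives the extremizer for free. Your treatment of part (a) (lifting along paths, using $\mathbb{Z}$-independence to kill cycle relations, and genericity of good direction sets for the converse) matches the paper's Lemma~\ref{lem: basicpicture} and Theorem~\ref{thm: Zinf} in substance; separating components into a fresh coordinate is a clean substitute for the paper's extremality/translation argument for connectedness.

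The one place where you have deferred real work is the inequality $F(p)+F(q)+\min(p,q)\le F(p+q)$, which you correctly flag as the crux: it carries exactly the content of the paper's Theorem~\ref{thm: 2 cube}, so leaving it as a sketch leaves the main theorem unproved. The sketch does close, and more cleanly than via the carry identity: setting $G(p,q)=F(p+q)-F(p)-F(q)-\min(p,q)$, the doubling formulas $F(2n)=2F(n)+n$ and $F(2n+1)=2F(n)+n+H(n)$ give, for $p\le q$, the recursions $G(2a,2b)=2G(a,b)$, $G(2a,2b+1)=G(a,b)+G(a,b+1)$ (and its mirror in the odd--even case), and $G(2a+1,2b+1)=G(a,b+1)+G(a+1,b)+1$ for $a<b$, while the delicate diagonal case is handled by the identity $G(m,m)=F(2m)-2F(m)-m=0$. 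With base cases $G(0,q)=0$ this is a complete strong induction, and it also recovers your observation about when equality holds. Write this out and the proposal is a complete, correct alternative proof.
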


The equivalence of (a) is proven in Theorem~\ref{thm: Zinf}, the equivalence of part (b) is proven in Corollary~\ref{cor: Zd max = unit cube max} and
the formulas for $T(n)$ are proven in Theorem~\ref{thm: count}.

As the first step, we show that any extremal configuration ($n$-point set in the Euclidean plane with $\mathbb{Q}$-independent unit direction set and
which determines the maximum $T(n)$ of unit distances) can be assumed to have a {\bf good} set of unit directions as defined below and furthermore
can be assumed to lie in the lattice generated by this good set of directions.

A set of unit directions $u_1, \dots u_{\ell} \in S^1$ is {\bf good} if $\| a_1 u_1 + \dots + a_{\ell} u_{\ell} \|=1$ for $(a_1, \dots, a_{\ell}) \in \mathbb{Z}^{\ell}$ if and only
if exactly one $a_j$ is nonzero. We also show that {\bf good} unit direction sets are ``generic'' (a dense $G_{\delta}$ set) in the space of collections of unit directions and in particular, that ``most''
$n$-point sets in $\mathbb{R}^2$ have $D_A$ a good unit direction set. Given this, we describe a process of making a $n$ point set with rationally
independent unit direction set which achieves the maximum possible number $T(n)$ of unit distances:

\begin{pro}
\label{pro: main}
Let $n \geq 1$ be given. Choose $d$ such that $2^{d-1} < n \leq 2^d$ and any collection of unit directions $u_0, \dots, u_{d-1} \in S^1$ which is {\bf good}.
Then let
$$
A = \{ a_0 u_0 + \dots + a_{d-1} u_{d-1} | a_j \in \{0, 1 \}, 0 \leq a_0+a_12 + \dots + a_{d-1}2^{d-1} < n \}
$$
Then $A$ is a $n$-point set in the Euclidean plane with rationally independent unit distance set which determines exactly the maximum possible $T(n)$ unit distances.
\end{pro}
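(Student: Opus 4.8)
The plan is to verify three things about the set $A$: that $|A|=n$, that its unit direction set $D_A$ is $\mathbb{Q}$-independent, and that $A$ determines exactly $T(n)$ unit distances (so that, $T(n)$ being the maximum, $A$ is extremal). I would first record that a \textbf{good} set of unit directions $u_0,\dots,u_{d-1}$ is automatically $\mathbb{Q}$-linearly independent: if $\sum_j c_j u_j = 0$ with the $c_j \in \mathbb{Z}$ not all zero, then $u_0 + \sum_j c_j u_j$ equals $u_0$, hence has norm $1$, while its integer coefficient vector is $(c_0+1,c_1,\dots,c_{d-1})$; running through the cases in which this vector has a single nonzero entry forces $c=0$ or $u_0 = \pm u_j$, both absurd. (If this has already been established earlier when \textbf{good} sets are compared with $\mathbb{Q}$-independent ones, I would simply cite it.) It follows that the map $\phi\colon\{0,1\}^d\to\mathbb{R}^2$, $\phi(a_0,\dots,a_{d-1})=\sum_j a_ju_j$, is injective; since the binary values $\sum_j a_j2^j$ of vectors $a\in\{0,1\}^d$ run bijectively over $\{0,\dots,2^d-1\}$ and $2^{d-1}<n\le 2^d$, the index set $S=\{a\in\{0,1\}^d: 0\le\sum_j a_j2^j<n\}$ has $n$ elements, so $|A|=|\phi(S)|=n$.

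Next I would pin down the unit-distance graph of $A$, which is the heart of the matter. For $a,b\in\{0,1\}^d$ the difference $a-b$ lies in $\{-1,0,1\}^d\subset\mathbb{Z}^d$, so the defining property of a \textbf{good} set gives $\|\phi(a)-\phi(b)\|=\|\sum_j(a_j-b_j)u_j\|=1$ if and only if exactly one coordinate of $a-b$ is nonzero, that is, if and only if $\phi(a)-\phi(b)=\pm u_j$ for some $j$, equivalently $a$ and $b$ are adjacent in the $d$-dimensional cube graph $Q_d=\{0,1\}^d$. Hence $\phi|_S$ is a graph isomorphism from $Q_d[S]$ onto the unit-distance graph of $A$. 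Two consequences drop out: every unit-length difference of points of $A$ is one of the $\pm u_j$, so $D_A$ is a signed subset of $\{u_0,\dots,u_{d-1}\}$ and is therefore $\mathbb{Q}$-independent; and the number of unit distances of $A$ equals the number of edges of $Q_d[S]$, where (identifying $\{0,1\}^d$ with $\{0,\dots,2^d-1\}$ via binary) $S=\{0,1,\dots,n-1\}$ is precisely the initial segment of length $n$ in the binary-counting order on $\{0,1\}^\infty$.

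It then remains to evaluate this edge count. Writing $E(n)$ for the number of edges induced by $\{0,\dots,n-1\}$ in $\{0,1\}^\infty$, I would split at the top bit: for $n=2^k+m$ with $2^k<n\le 2^{k+1}$ (so $0<m\le 2^k$), the first $2^k$ vertices span a $k$-cube with $k2^{k-1}$ edges, the last $m$ vertices form a translate of $\{0,\dots,m-1\}$ contributing $E(m)$ edges, and each $2^k+i$ ($0\le i<m$) is joined to $i$ by flipping bit $k$ — these being all the crossing edges — contributing $m$ more; thus $E(n)=k2^{k-1}+m+E(m)$. Iterating along the binary expansion $n=\sum_{j=1}^t2^{k_j}$ with $k_1>\dots>k_t$ and keeping track of the accumulated carries $m$ gives $E(n)=\sum_{j=1}^t\bigl(k_j2^{k_j-1}+(j-1)2^{k_j}\bigr)$, which by Theorem~\ref{thm: count} is exactly $T(n)$. (Alternatively one may quote the hypercube edge-isoperimetric inequality — initial segments of the binary order maximize the induced edge count — so that $E(n)$ equals the maximum in part (b) of Theorem~1.1 directly.) Combining the steps: $A$ is an $n$-point subset of $\mathbb{R}^2$ with $\mathbb{Q}$-independent unit direction set and hence determines at most $T(n)$ unit distances, and it determines $E(n)=T(n)$ of them, so it is extremal.

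I expect the only real obstacle to be Step 2, and specifically the point that \textbf{goodness}, rather than mere $\mathbb{Q}$-independence of $u_0,\dots,u_{d-1}$, is exactly what is needed: it is what prevents any nontrivial $\mathbb{Z}$-combination of the $u_j$ from having norm $1$ and thereby contributing a spurious unit distance to $A$. If, for instance, $\|u_i+u_j\|$ happened to equal $1$, then $u_i$, $u_j$ and $\pm(u_i+u_j)$ would all lie in $D_A$, which would fail to be $\mathbb{Q}$-independent; ruling this out is precisely the content of the \textbf{good} hypothesis. Everything else — the cardinality count and the recursion for $E(n)$ — is routine bookkeeping once this structural identification is in hand.
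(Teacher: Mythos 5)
Your proof is correct and follows essentially the same route as the paper: goodness makes $\phi$ an isomorphism from the induced hypercube subgraph on the binary initial segment $\{0,\dots,n-1\}$ onto the unit-distance graph of $A$, and the edge count of that initial segment equals $T(n)$ by Theorem~\ref{thm: count} (equivalently, by Theorem~\ref{thm: 2 cube} together with Theorem~\ref{thm: Zinf} and Corollary~\ref{cor: Zd max = unit cube max}). The only cosmetic difference is that you recompute the edge count via the top-bit recursion $E(2^k+m)=k2^{k-1}+m+E(m)$ instead of the paper's telescoping $T(n)=\sum_{k=0}^{n-1}H(k)$; both yield the same formula.
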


 Proposition~\ref{pro: main} follows from the reductions in section~\ref{section: good directions} and
the results in section~\ref{sec: edgemax}. Using the notion of good directions sets, we reduce the problem to a packing problem in $\mathbb{Z}^{\infty}$, the countably infinite integer lattice and
then to a problem in $\{0,1\}^{\infty}$ the countably infinite hypercube graph. These lattice reductions are similar to what is discussed in work of
Brass (see \cite{BMP05} for a reference), but with a particular lattice arising specific to our particular situation. We reduce to this canonical lattice by
reducing the problem to configurations with good unit direction sets.

This final packing/graph theoretic result for hypercube graphs is interesting of its own right and had been previously studied. The result says that the best way to choose $n$ points, $n \leq 2^d$, in the
$d$-dimensional hypercube $\{0,1\}^d$ graph to maximize the number of edges determined (an edge is determined every time two vectors have Hamming distance one, i.e., differ in exactly one coordinate) is to choose the $n$ points as the binary representations of the numbers $0$ through $n-1$.

\begin{thm}
Fix $n \leq 2^d$. Let $V$ be a $n$-point subset of the hypercube graph on vertex set $\{0,1\}^d$ which determines $E(V)$ edges.
Let $V_n \subseteq \{0,1\}^d$ be the vertex set on $n$ vertices given by the binary expansions of the numbers $0$ through $n-1$,
then $E(V_n) \geq E(V)$.
\end{thm}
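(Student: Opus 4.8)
\noindent
The statement is the classical edge-isoperimetric inequality for the hypercube (Harper, Lindsey, Bernstein, Hart), and the plan is to prove it by induction on the dimension $d$. For $0\le m\le 2^d$ write $V_m\subseteq\{0,1\}^d$ for the initial segment consisting of the binary expansions of $0,1,\dots,m-1$, and set $b(m)=E(V_m)$. Since for $m\le 2^{d-1}$ the set $V_m$ already lies in the facet $x_d=0$, the quantity $b(m)$ does not depend on the ambient dimension, so it is a well-defined function of $m$ alone. The goal is to show $E(V)\le b(n)$ for every $n$-point set $V\subseteq\{0,1\}^d$ with $n\le 2^d$; the reverse inequality is trivial.

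For the inductive step I would split $\{0,1\}^d$ along the last coordinate into the facets $x_d=0$ and $x_d=1$, identify each with $\{0,1\}^{d-1}$, and correspondingly write $V=V^{(0)}\sqcup V^{(1)}$ with $|V^{(i)}|=n_i$ and $n_0+n_1=n$. Every edge of $V$ either lies inside one facet or joins the two, so
\begin{equation*}
E(V)=E(V^{(0)})+E(V^{(1)})+\bigl|V^{(0)}\cap V^{(1)}\bigr|\le E(V^{(0)})+E(V^{(1)})+\min(n_0,n_1),
\end{equation*}
and the inductive hypothesis in dimension $d-1$ (legitimate since $n_0,n_1\le 2^{d-1}$) gives $E(V^{(i)})\le b(n_i)$. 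Thus everything reduces to the superadditivity bound
\begin{equation*}
b(a)+b(b)+\min(a,b)\le b(a+b)\qquad(a,b\ge 0).\tag{$\star$}
\end{equation*}
Granting $(\star)$ and the trivial base case $d=1$, one obtains $E(V)\le b(n)$; moreover equality holds all the way through for $V=V_n$, since decomposing $V_n$ along $x_d$ produces $n_0=\min(n,2^{d-1})$, $n_1=\max(0,n-2^{d-1})$ and $V^{(1)}\subseteq V^{(0)}$, whence $|V^{(0)}\cap V^{(1)}|=\min(n_0,n_1)$. So $E(V_n)=b(n)$ really is the maximum.

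The remaining point, which I expect to be the main obstacle, is $(\star)$ itself. Here I would first record the ``halving'' recursions obtained by splitting $V_m$ instead along the \emph{lowest} coordinate (even versus odd labels), namely $b(2m)=2b(m)+m$ and $b(2m+1)=b(m)+b(m+1)+m$. Then prove $(\star)$ by induction on $a+b$: assuming $a\ge b\ge 1$ (the case $b=0$ is trivial), write $a=2a'+\varepsilon_a$, $b=2b'+\varepsilon_b$ with $\varepsilon_a,\varepsilon_b\in\{0,1\}$, expand both sides of $(\star)$ using the recursions, and in each of the four parity cases reduce to one or two instances of $(\star)$ with strictly smaller argument-sum. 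The cases close because $a\ge b$ forces $a'\ge b'$ (and $a'\ge b'+1$ when $\varepsilon_a=0,\varepsilon_b=1$), which makes the leftover minimum-terms balance, e.g.\ $\min(a'+1,b')+\min(a',b')=2b'$. This is elementary but genuinely case-heavy, and is the only place any real computation occurs. As a dividend the same two recursions yield, by a routine induction, the closed form $b(n)=\sum_{j}\bigl(k_j2^{k_j-1}+(j-1)2^{k_j}\bigr)$ for $n=\sum_j 2^{k_j}$ with $k_1>\cdots>k_t\ge 0$, together with $b(n+1)-b(n)=H(n)$: inserting the vertex labelled $n$ into $V_n$ creates exactly one new edge per $1$-bit of $n$, because flipping a $1$-bit of $n$ downward yields a label already present in $V_n$ while flipping a $0$-bit upward yields a label not yet present.

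Finally, I would note that an alternative to the induction is a compression argument: repeatedly replace $V$ by its ``downward push'' in each coordinate $i$ (within every $i$-edge having exactly one endpoint in $V$, move that endpoint to the $x_i=0$ end), an operation that preserves $|V|$, does not decrease $E(V)$, and strictly decreases $\sum_{x\in V}(\text{label of }x)$, hence terminates at a fully compressed set, for which one then argues directly that $E(V)\le b(n)$. I would present the inductive proof, however, since it simultaneously produces the explicit formula for $b(n)$ that is needed elsewhere in the paper.
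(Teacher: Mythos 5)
Your proof is correct, but it takes a genuinely different route from the one in the paper. You split the cube along a single coordinate into two facets, bound the cross-edges by $\min(n_0,n_1)$, and reduce the whole theorem to the numerical superadditivity inequality $b(a)+b(b)+\min(a,b)\le b(a+b)$ for the edge-count function $b(m)=E(V_m)$ of initial segments, which you then prove by a separate induction using the parity recursions $b(2m)=2b(m)+m$ and $b(2m+1)=b(m)+b(m+1)+m$. I checked that the four parity cases do close as you claim (in each case the two mins that arise are $b'$ and either $b'$ or $b'+1$, which is enough to absorb the $2b'$ on the left), and the base cases $b\in\{0,1\}$ are trivial, so the argument is complete modulo writing out that case analysis. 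This is essentially the classical Hart/Bernstein-style proof that the paper cites and explicitly positions its own argument as an alternative to: the paper instead splits on the \emph{two} leading coordinates into a $2\times 2$ array of blocks, applies the $(d-1)$-dimensional induction hypothesis to "horizontally arrange" and "vertically arrange" the rows and columns, and finishes with a short case analysis on which blocks are full or empty, thereby avoiding any standalone numerical lemma. Your approach trades that geometric rearrangement for the arithmetic of $(\star)$, and in return the recursions hand you the closed form for $b(n)$ and the identity $b(n+1)-b(n)=H(n)$ essentially for free, which the paper derives separately in its Theorem 3.8; the paper's approach keeps everything at the level of explicit edge-preserving operations on vertex sets, which is what the authors mean in calling it "shorter and cleaner."
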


We provide an alternate proof of this result in this paper for completeness and because we feel it is somewhat shorter and cleaner than those that have previously appeared
in the literature \cite{Ber67}, \cite{Harp64}, \cite{Hart76}. These ``packing problems'' are also related to isoperimetric problems \cite{Bez94}.

\section{Good direction sets}
\label{section: good directions}

Let $A$ be an $n$-point set in the Euclidean plane ($\mathbb{R}^2$ equipped with the Euclidean metric). We define its unit difference set
$$
U_{A} = \{ x-y | x,y \in A, \| x-y \|=1 \} \subset S^1
$$ where $S^1$ is the unit circle. Note that $U_{A}$ is symmetric i.e. if $u \in U_{A}$ so is $-u$.
Choose a representative of each direction represented in $U_{A}$ which
has principal argument (in radians) in the interval $[0,\pi)$ to form the unit direction set
$$D_{A}= \{ x-y | x,y \in A, \| x-y \|=1, Arg(x-y) \in [0,\pi) \} \subset S^1.$$

We will say that $A$ has rationally independent unit direction set if the elements of $D_A$ are independent over $\mathbb{Q}$, where $\mathbb{Q}$ is the field of
rational numbers.

The unit direction graph determined by $A$ is the graph whose vertex set is the set $A$ and where there is an edge between $a_1, a_2 \in A$ if and only if
$\|a_1 - a_2\|=1$.

\begin{defn}
An {\bf extremal configuration} will denote an $n$-point set $A$ in the Euclidean plane with rationally independent unit direction set which achieves the maximum
possible number $T(n)$ of unit distances subject to these conditions.
\end{defn}

Note that trivially $T(1)=0$ so we might as well restrict out attention to $n \geq 2$. Our first lemma shows that we may always assume certain properties about our extremal configurations:

\begin{lem}
\label{lem: basicpicture}
For any fixed $n \geq 2$, there exists an extremal configuration $A$ such that $(0,0) \in A, (1,0) \in D_A$ and the unit direction graph determined by $A$ is
path connected. Thus $A \subset \mathbb{Z}(D_A) \cong \mathbb{Z}^d$ where $d=|D_A|$. Here $\mathbb{Z}(D_A)$ denotes the set of all $\mathbb{Z}$-linear combinations of elements in $D_A$. We also have $1 \leq d \leq n-1$.
\end{lem}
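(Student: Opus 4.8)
The plan is to start from an arbitrary extremal configuration $A_0$ and successively normalize it by applying rigid motions, deleting and re-adding points, and finally observing that all the relevant arithmetic takes place inside the lattice generated by $D_{A_0}$. First I would fix an extremal configuration $A_0$ on $n \geq 2$ points; since $T(2) \geq 1$ (two points at unit distance), $A_0$ must determine at least one unit distance, so $D_{A_0}$ is nonempty and the unit direction graph $G_0$ of $A_0$ has at least one edge. Pick an edge $\{p,q\}$ of $G_0$; translating $A_0$ by $-p$ and then rotating about the origin so that $q-p$ points in the direction $(1,0)$ gives a congruent configuration $A_1$ with $(0,0)\in A_1$, $(1,0)\in A_1$, and $(1,0)\in U_{A_1}$; replacing $(1,0)$ by its negative if necessary in the choice of representatives (or noting $Arg((1,0))=0\in[0,\pi)$) we get $(1,0)\in D_{A_1}$. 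Rigid motions preserve all distances, hence preserve the number of unit distances and the $\mathbb{Q}$-(in)dependence of the direction set, so $A_1$ is still extremal. Call the result $A$ after this step.

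Next I would handle path-connectedness of the unit direction graph. Suppose $G = G_A$ has connected components $C_1,\dots,C_m$ with $m \geq 2$, where we arrange that the component containing $\{(0,0),(1,0)\}$ is $C_1$. The idea is that the components can be translated independently of one another to be glued together along an edge without destroying extremality. Concretely, pick a vertex $v$ in $C_2$ and a vertex $w$ in $C_1$; translate the entire component $C_2$ by $w + (1,0) - v$ so that (a fresh copy of) $v$ lands at $w+(1,0)$. Generically this translation does not cause any new coincidences of points (two points of distinct old components colliding) or new unit distances between the two components other than the one intended edge $\{w, w+(1,0)\}$; the only subtlety is to check that it does not create a new unit direction that is $\mathbb{Q}$-dependent with the existing ones. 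I would argue: the set of translation vectors $t$ for which $C_1 \cup (C_2 + t)$ either has a repeated point, or creates an extra unit distance in a direction not already in $D_A$, or creates a unit distance in an already-present direction but with the new direction set failing $\mathbb{Q}$-independence, is a measure-zero (in fact finite, once we constrain $t$ so that one chosen pair is at distance exactly $1$) union of bad configurations; hence we can perturb the gluing edge's direction slightly — replacing $(1,0)$ by a nearby good unit direction — to avoid all bad cases. This merges $C_1$ and $C_2$ into one component while strictly increasing the edge count by (at least) $1$, contradicting extremality unless $m=1$. Actually, it is cleaner to phrase this as: an extremal configuration has the maximum number of edges, so if it were disconnected we could add an edge as above to get strictly more unit distances with the direction set still $\mathbb{Q}$-independent, contradiction; therefore every extremal configuration that we build by the previous normalization step and this gluing step is connected. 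The main obstacle is exactly this perturbation argument — verifying that one can always choose the gluing translation (equivalently, slightly rotate the new edge) so as to avoid introducing $\mathbb{Q}$-dependencies — and it is where I would be most careful, though it should follow from a Baire-category or measure-zero count since each "bad" algebraic relation cuts out a lower-dimensional set.

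Finally, with $A$ normalized to contain $(0,0)$, to have $(1,0) \in D_A$, and to have path-connected unit direction graph, I would prove $A \subset \mathbb{Z}(D_A)$: every vertex $a \in A$ is joined to $(0,0)$ by a path $a = a_0, a_1, \dots, a_k = (0,0)$ in $G_A$, and each consecutive difference $a_{i}-a_{i+1}$ is a unit vector, hence $\pm$ an element of $D_A$; summing telescopically, $a = \sum_{i=0}^{k-1}(a_i - a_{i+1}) \in \mathbb{Z}(D_A)$. To see $\mathbb{Z}(D_A) \cong \mathbb{Z}^d$ with $d = |D_A|$, note that the elements of $D_A$ are $\mathbb{Q}$-independent in $\mathbb{R}^2$ viewed as a $\mathbb{Q}$-vector space (of infinite dimension), so they are $\mathbb{Z}$-linearly independent and generate a free abelian group of rank $d = |D_A|$. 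The bounds $1 \leq d \leq n-1$ follow from: $d \geq 1$ since $A$ determines at least one unit distance; and $d \leq n-1$ because the unit direction graph is connected on $n$ vertices, hence contains a spanning tree with $n-1$ edges, and the $n-1$ tree-edge directions already span $\mathbb{Z}(D_A)$ — so any further direction in $D_A$ would be a $\mathbb{Z}$-linear (in particular $\mathbb{Q}$-linear) combination of these, contradicting $\mathbb{Q}$-independence; thus $|D_A| \leq n-1$. This last count is short and is the payoff that makes the reduction to $\mathbb{Z}^d$ useful.
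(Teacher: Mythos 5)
Your proposal is correct in outline and follows the same overall strategy as the paper (normalize by a rigid motion, glue components of a disconnected unit direction graph to contradict extremality, telescope along paths, bound the rank), but two sub-arguments differ. For the gluing step you attach $C_2$ to $C_1$ at an arbitrary pair $(v,w)$ and then invoke a genericity/perturbation argument to avoid collisions, unwanted new unit distances, and $\mathbb{Q}$-dependencies; you correctly flag this as the delicate point, and it can indeed be completed (once the designated pair is constrained to be at distance $1$ the translation varies over a circle; each collision or extra cross unit distance excludes at most finitely many points of that circle, or else forces the extra difference to point in the same direction as the gluing edge and hence be harmless; and the gluing directions creating a $\mathbb{Q}$-dependence with the finite set $D_A$ form only a countable set). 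The paper sidesteps all of this with a cleaner choice: translate $C_2$ so that its leftmost point lands exactly one unit to the right of the rightmost point of $C_1$; then every cross pair is at horizontal distance at least $1$, so the only possible new unit differences are exactly $(1,0)$, which is already in $D_A$ --- the direction set is literally unchanged and no genericity or perturbation is needed. For the bound $d\le n-1$ you use a spanning tree of the connected unit direction graph together with $\mathbb{Q}$-independence of $D_A$, while the paper observes that $\mathbb{Z}(D_A)$ is a subgroup of the free abelian group of rank at most $n-1$ generated by $\{a_i-a_1\}$ and invokes the fact that subgroups of free abelian groups over a PID cannot have larger rank; both are valid, and yours is slightly more elementary. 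The remaining steps (the initial rigid motion, the telescoping argument, and $\mathbb{Q}$-independence giving $\mathbb{Z}(D_A)\cong\mathbb{Z}^d$) coincide with the paper's.
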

\begin{proof}
It is clear that as $n \geq 2$, $D_A$ is nonempty thus there is a pair of points $a_1, a_2 \in A$ such that $\|a_1-a_2\|=1$. After applying a translation and a rotation
to all the points of $A$ (which changes $D_A$ by a rotation so neither disturbs $\mathbb{Q}$-independence or $|D_A|$), we may assume $a_1=(0,0)$ and $a_2=(1,0)$ lie in $A$. Thus WLOG $(0,0) \in A$ and $(1,0) \in A$ and hence $(1,0) \in D_A$ also.

Now suppose that the unit direction graph of $A$ was disconnected. Let $C_1$ be a component and $C_2$ the rest of the graph. Then $C_1$ contains a
right-most vector (vector with maximal $x$-coordinate) $\hat{u}$ and $C_2$ contains a left-most vector (vector with minimal $x$-coordinate) $\hat{v}$.
We may then translate $C_2$ so that the image of $\hat{v}$ lies one unit to the right of $\hat{u}$. This changes no internal connections in either $C_1$ (which was left alone) or $C_2$ but raises the number of connections between them. It does not change the direction set $D_A$ either as $(1,0) \in D_A$ already and
no other unit directions were created in the process as the translated $C_2$ as a set lies one unit to the right of the rightmost point(s) of $C_1$. Thus we achieve a new extremal configuration with at least one more unit distance than the one we started with which contradicts the extremality of the original configuration. Thus an extremal configuration always has (path) connected unit direction graph.

Now as the unit direction graph of $A$ is path connected and $(0,0) \in A$, we know for any $x \in A$, it is possible to go from $(0,0)$ to $x$ with a path of edges
consisting of vectors in $U_A=D_A \cup -D_A$. Thus $A$ is contained in the $\mathbb{Z}$-span of $D_A$ as claimed. Finally as the elements of
$D_A$ are $\mathbb{Q}$-independent, they are also $\mathbb{Z}$-independent so the $\mathbb{Z}$-span of $D_A$ is a free abelian group of rank $d$
where $d=|D_A|$.

List $A$ as $A=\{a_1, \dots, a_n\}$. Now note that the $\mathbb{Z}$-span of $D_A$ is itself contained in the $\mathbb{Z}$-span of $\{ a_i - a_j | a_i, a_j \in A \}$. This later set is contained
in the $\mathbb{Z}$-span of the set $\{ a_2-a_1, a_3-a_1, \dots, a_n-a_1 \}$, a free abelian group of rank $\leq n-1$. Thus $d \leq n-1$ as the rank $d$ of a subgroup of
free abelian group of rank $n-1$ must have $d \leq n-1$ as $\mathbb{Z}$ is a PID.

\end{proof}

Note that while the $\mathbb{Z}$-span of $D_A$ is a subgroup of $\mathbb{R}^2$ isomorphic to $\mathbb{Z}^d$, it cannot be a closed subgroup unless
$d \leq 2$. This is because by Lie theory, the only closed subgroups of $\mathbb{R}^2$ are isomorphic to $\mathbb{R}^m, \mathbb{Z}^m$ with $m \leq 2$
or $\mathbb{R} \times \mathbb{Z}$, thus the closure of the $\mathbb{Z}$-span of $D_A$ must be all of $\mathbb{R}^2$ whenever $d > 2$, i.e., it must generate a dense ``lattice''.

By Lemma~\ref{lem: basicpicture}, an extremal configuration $A$ of any size $n \geq 2$ may be constructed by choosing a ``lattice'' $\mathbb{Z}^d \subseteq \mathbb{R}^2$ and distributing $n$ points in it to maximize the number of unit distances achieved by the set. Furthermore the basis vectors of this lattice have unit length and can be assumed to be achieved by the set as differences. Note that any other pair of points of unit distance in the lattice would have difference vector in the $\mathbb{Z}$-span of these basis vectors and so such a pair would have to be avoided by the set $A$ to maintain the condition of $\mathbb{Q}$-independence of unit directions.

This indicates that a ``good lattice'' to work with would be one where the only pairs of lattice points with unit distance apart are those which differ by plus or minus
a basis vector. This motivates the definition of a ``good direction set'' which we make next:

\begin{defn}
Let $S^1$ be the unit circle in the Euclidean plane and let $E \subset S^1$ be the set of points on the unit circle whose principal argument lies in
the interval $[0,\pi)$. Let $F(E,d) = \{ (u_1, \dots, u_d) | u_j \in E, u_i \neq u_j, 1 \leq i < j \leq d \}$ be the configuration space of $d$-tuples of distinct
directions in $E$, topologized as a subspace of $E^d$ with the product topology.

$(u_1,\dots,u_d) \in F(E,d)$ is a {\bf good set of directions} if whenever $\sum_{k=1}^d a_k u_k \in S^1$ with $(a_1,\dots,a_d) \in \mathbb{Z}^d$, we necessarily have
all but one $a_j$ is zero. (Note that this nonzero $a_j$ would then have to be $\pm 1$.)
\end{defn}

Note that $E^d$ is homeomorphic to $[0,\pi)^d$ and is hence locally compact, Hausdorff. $F(E,d)$ is an open subset of $E^d$ and so is also locally compact, Hausdorff and hence in particular a Baire space. Further note that a good set of directions has $\mathbb{Z}$-span equal to a ``lattice'' where the only pairs of vertices which have unit distance apart occur when the difference is plus or minus a basis vector. We record some properties of good direction sets in the next proposition.

\begin{pro}
Let $\{u_1, \dots, u_d\} \subset E \subset S^1$ be a good direction set, then $\{u_1, \dots, u_d\}$ is $\mathbb{Q}$-independent and thus the $\mathbb{Z}$-span of $\{u_1,\dots,u_d\}$
 is isomorphic to $\mathbb{Z}^d \subset \mathbb{R}^2$. We can then express any $\hat{v}$ in the $\mathbb{Z}$-span of $\{u_1,\dots,u_d\}$ uniquely as an integer $d$-tuple $(a_1,\dots,a_d)$ where $\hat{v}=\sum_{j=1}^d a_ju_j$. The $a_j$'s are referred to as the lattice coordinates of $\hat{v}$.
 Then in this lattice two vectors $\hat{v}_1,\hat{v}_2$ have Euclidean distance one if and only if they have
 $\ell_1$ distance one i.e., if and only if they differ in exactly one lattice coordinate by an amount $\pm 1$.
\end{pro}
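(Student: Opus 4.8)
The plan is to handle the two assertions in turn, establishing $\mathbb{Q}$-independence first and then the characterization of Euclidean unit distances, since the second follows almost immediately from the definition once lattice coordinates are available.

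For $\mathbb{Q}$-independence I would argue by contradiction. Suppose $\{u_1,\dots,u_d\}$ satisfies a nontrivial rational dependence; clearing denominators yields integers $b_1,\dots,b_d$, not all zero, with $\sum_{j=1}^d b_j u_j = 0$. Pick an index $k$ with $b_k \neq 0$. If $b_k$ were the only nonzero coefficient we would have $b_k u_k = 0$, which is impossible since $u_k \in S^1$; hence there is a second index $j_0 \neq k$ with $b_{j_0} \neq 0$. Now consider $w := u_k + 2\sum_{j=1}^d b_j u_j = u_k \in S^1$. Expressed in terms of the $u_j$, the integer coefficient vector of $w$ is $(2b_1,\dots,2b_{k-1},\,1+2b_k,\,2b_{k+1},\dots,2b_d)$: the entry in position $j_0$ is $2b_{j_0}\neq 0$, and the entry in position $k$ is $1+2b_k$, which cannot vanish because $b_k$ is an integer. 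Thus $w \in S^1$ has an integer coefficient vector with at least two nonzero entries, contradicting that $\{u_1,\dots,u_d\}$ is a good set of directions. Therefore no such dependence exists. Being $\mathbb{Q}$-independent, the $u_j$ are in particular $\mathbb{Z}$-independent, so their $\mathbb{Z}$-span is a free abelian group of rank $d$, canonically isomorphic to $\mathbb{Z}^d$, and each $\hat v$ in this span has a unique expression $\hat v = \sum_{j=1}^d a_j u_j$ with $(a_1,\dots,a_d)\in\mathbb{Z}^d$, its lattice coordinates.

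For the distance characterization, take $\hat v_1,\hat v_2$ in the $\mathbb{Z}$-span with lattice coordinates $(a_1,\dots,a_d)$ and $(b_1,\dots,b_d)$, and put $c_j := a_j-b_j \in \mathbb{Z}$, so that $\hat v_1-\hat v_2 = \sum_{j=1}^d c_j u_j$. If $\|\hat v_1-\hat v_2\|=1$ then $\sum_j c_j u_j \in S^1$, so by the definition of a good direction set exactly one $c_j$ is nonzero and it equals $\pm 1$; equivalently $\hat v_1$ and $\hat v_2$ differ in exactly one lattice coordinate by $\pm 1$, i.e.\ have $\ell_1$ distance $1$. Conversely, if they differ in exactly one lattice coordinate, say $c_j=\pm 1$ and $c_i=0$ for $i\neq j$, then $\hat v_1-\hat v_2 = \pm u_j$, which has Euclidean norm $1$ since $u_j \in S^1$. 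This proves both implications.

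I expect the only mildly delicate point to be the first step: manufacturing, from a norm-zero integer relation, a norm-one integer combination with two or more nonzero coefficients. The device is simply to add a multiple of the relation (the factor $2$ above, chosen precisely so that $1+2b_k$ cannot vanish for an integer $b_k$) to a single basis vector, which already lies on $S^1$. Everything else is a direct unwinding of the definitions together with the standard fact that a $\mathbb{Q}$-independent set spans a free abelian group of the expected rank.
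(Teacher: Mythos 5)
Your proof is correct, and its overall architecture matches the paper's: argue $\mathbb{Q}$-independence by contradiction, clear denominators to get an integer relation $\sum_j b_j u_j=0$, convert that relation into an integer combination lying on $S^1$ with at least two nonzero coefficients, and then read off the unit-distance characterization directly from the definition of a good set (your second paragraph is exactly the ``easy to check'' step the paper leaves to the reader). The one place you genuinely diverge is the conversion step. The paper rewrites the relation as $(a_1-1)u_1+a_2u_2+\dots+a_du_d=-u_1\in S^1$ and then has to run a small case analysis: either $a_2=\dots=a_d=0$ (forcing $a_1=0$), or $a_1=1$ and one other coefficient survives, which leads to $u_1=-a_2u_2$ and requires invoking distinctness of the $u_j$ and the fact that both lie in $E$. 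Your device --- forming $u_k+2\sum_j b_ju_j=u_k\in S^1$ so that the coefficient in slot $k$ is the odd integer $1+2b_k$ and the coefficient in slot $j_0$ is $2b_{j_0}\neq 0$ --- guarantees two nonzero coefficients outright and eliminates the case analysis and the appeal to distinctness. Note that no uniqueness of representation is needed at that point (and none is available yet): the goodness condition quantifies over all integer tuples whose combination lands on $S^1$, and you have exhibited one bad tuple, which is all that is required. A minor stylistic caveat: phrasing it as ``the integer coefficient vector of $w$'' suggests uniqueness; better to say you have produced \emph{a} tuple $(2b_1,\dots,1+2b_k,\dots,2b_d)$ witnessing the violation.
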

\begin{proof}
Let $\{u_1, \dots, u_d\}$ be a good direction set. If $d=1$ the set is $\mathbb{Q}$-independent as $u_1 \in S^1$ is nonzero so WLOG $d \geq 2$.
Suppose that this set were rationally dependent. Then
$q_1 u_1 + \dots + q_d u_d = 0$ for rational numbers $q_j$, not all zero. We can then clear denominators and conclude that
$a_1 u_1 + \dots + a_d u_d = 0$ for integers $a_j$, not all zero. WLOG suppose $a_1 \neq 0$, then we can write
$(a_1-1)u_1 + \dots + a_du_d = -u_1$ and so the left hand side of the equation lies on $S^1$.  Since the set of directions is good, this implies
either $a_2=\dots=a_d=0$ which implies $a_1u_1=0$ so $a_1=0$ also a contradiction, or that $a_1=1$, and all but one of the other $a_j$'s are equal to $0$.
Without loss of generality let $a_2$ be the other $a_j$ that is nonzero then we get $u_1 + a_2u_2 = 0$ which implies $u_1 = -a_2u_2$ implying $a_2 = \pm 1$.
However both $u_1,u_2 \in E$ so we conclude $u_1=u_2$. This is a contradiction as good direction sets are defined to consist of distinct elements or
by noticing this would mean $1u_1 -2u_2 = -u_1 \in S^1$ contradicting the good direction condition. Thus we conclude good direction sets are $\mathbb{Q}$-independent and so the abelian group they
generate is free abelain of rank $d$ as claimed. Finally if $v_1 = a_1u_1 + \dots + a_du_d$ and $v_2=b_1u_1+\dots+b_du_d$ for $(a_1,\dots,a_d),(b_1,\dots,b_d) \in \mathbb{Z}^d$, it is easy to check that
$\|v_1-v_2\|=1$ if and only if $\|(a_1,\dots,a_d) - (b_1,\dots,b_d)\|_1=1$ using the definition of good direction set.
(Here $\| \cdot \|$ is the Euclidean norm on $\mathbb{R}^2$ while $\| \cdot \|_1$ is the $\ell_1$-norm on $\mathbb{Z}^d$.)

\end{proof}

\begin{example}
Identify $\mathbb{R}^2$ with the complex plane $\mathbb{C}$. The set $\{1, \xi=e^{\frac{2\pi i}{3}}\} \subset E \subset S^1$ is $\mathbb{Q}$-independent but is not a good
direction set as $1(1) + 1(\xi) = -\xi^2 \in S^1$.
\end{example}

We now show that the set $G \subset F(E,d)$ of good sets of unit directions is a dense $G_{\delta}$ set. Recall a $G_{\delta}$ set is a countable intersection of
open sets.

\begin{thm}
\label{thm: gooddir}
Let $G \subseteq F(E,d)$ be the set of good direction sets $(u_1,\dots, u_d)$ then $G$ is a dense $G_{\delta}$ set in $F(E,d)$.
\end{thm}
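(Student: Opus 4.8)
The plan is to exhibit $G$ as the complement of a countable union of closed ``bad sets'' $B_{\mathbf a}$ indexed by integer vectors, verify that each $B_{\mathbf a}$ is closed with empty interior, and then invoke the Baire category theorem for $F(E,d)$ (legitimate since, as already noted, $F(E,d)$ is locally compact Hausdorff, hence Baire).

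I would first identify $E\cong[0,\pi)$ via $\theta\mapsto e^{i\theta}$, working in $\mathbb{C}\cong\mathbb{R}^2$, so that $F(E,d)$ becomes $\{(\theta_1,\dots,\theta_d)\in[0,\pi)^d:\theta_i\neq\theta_j\text{ for }i\neq j\}$. A tuple $(u_1,\dots,u_d)$ fails to be good precisely when there is some $\mathbf a=(a_1,\dots,a_d)\in\mathbb{Z}^d$ with at least two nonzero entries and $\|\sum_k a_k u_k\|=1$ (a single nonzero $a_k$ is automatically forced to be $\pm1$, and the zero vector gives $0\notin S^1$). Let $\mathcal A\subset\mathbb{Z}^d$ be the countable set of such $\mathbf a$, and for each $\mathbf a\in\mathcal A$ set $B_{\mathbf a}=\{(u_1,\dots,u_d)\in F(E,d):\|a_1u_1+\cdots+a_du_d\|=1\}$. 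Since $B_{\mathbf a}$ is the preimage of $\{1\}$ under the continuous map $(u_1,\dots,u_d)\mapsto\|\sum_k a_k u_k\|$, it is closed in $F(E,d)$, and $G=F(E,d)\setminus\bigcup_{\mathbf a\in\mathcal A}B_{\mathbf a}$ is a $G_\delta$.

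The substantive step is to show each $B_{\mathbf a}$ has empty interior. Suppose not; then $B_{\mathbf a}$ contains a nonempty relatively open subset of $[0,\pi)^d$, and hence a genuine open box $W\subseteq(0,\pi)^d\subseteq\mathbb{R}^d$, on which the real-analytic function $g(\theta)=\|\sum_k a_k e^{i\theta_k}\|^2$ is identically $1$. Writing $z=z(\theta)=\sum_j a_j e^{i\theta_j}$, a short computation gives $\partial g/\partial\theta_k=-2a_k\,\mathrm{Im}(\overline z\,e^{i\theta_k})$. On $W$ we have $|z|=1\neq0$ and all partials vanish, so for every index $k$ with $a_k\neq0$ the quantity $\overline z\,e^{i\theta_k}$ is real; as $|z|=1$ this forces $u_k=e^{i\theta_k}=t_k z$ for some real $t_k\neq0$. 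Since $\mathbf a\in\mathcal A$ has at least two nonzero coordinates $a_j,a_k$, the vectors $u_j,u_k$ are then parallel, so $u_j=\pm u_k$; but $u_j\neq u_k$ by distinctness and $u_j\neq -u_k$ since both arguments lie in $[0,\pi)$ --- a contradiction. (One can equally argue that real-analyticity forces $g\equiv1$ on all of $\mathbb{R}^d$, then note $\partial g/\partial\theta_j$ is a nonzero trigonometric polynomial whenever two $a_k$'s are nonzero.)

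Thus each $B_{\mathbf a}$ is closed with empty interior, hence nowhere dense, and by the Baire category theorem $\bigcup_{\mathbf a\in\mathcal A}B_{\mathbf a}$ has empty interior, so its complement $G$ is dense; together with the previous step this proves $G$ is a dense $G_\delta$ in $F(E,d)$. I expect the empty-interior claim for the $B_{\mathbf a}$'s (i.e.\ the derivative computation and the parallelism argument) to be the only real obstacle; the $G_\delta$ statement and the final Baire step are routine bookkeeping with countability, continuity, and the Baire property.
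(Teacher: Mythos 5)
Your proof is correct, and its overall architecture coincides with the paper's: both write $G$ as the complement in $F(E,d)$ of a countable union of closed sets indexed by the integer vectors with at least two nonzero coordinates, reduce everything to showing each such set is nowhere dense, and finish with the Baire category theorem in the locally compact Hausdorff space $F(E,d)$. The one place you genuinely diverge is the nowhere-density step. The paper expands $\bigl\|\sum_k a_k u_k\bigr\|^2=1$ into the real trigonometric identity $2\sum_{i<j}a_ia_j\cos(t_i-t_j)=1-\sum_j a_j^2$ and differentiates twice in each variable in turn to strip away all cosine terms, landing on $0=1-\sum_j a_j^2<0$. You instead take a single first derivative of $g=\lvert z\rvert^2$ in each variable, note that the vanishing of $\partial g/\partial\theta_k=-2a_k\,\mathrm{Im}(\overline{z}\,e^{i\theta_k})$ together with $\lvert z\rvert=1$ forces $u_k=\pm z$ for every $k$ with $a_k\neq 0$, and then contradict the distinctness of the $u_k$ and the normalization of arguments to $[0,\pi)$. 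Your computation of $\partial g/\partial\theta_k$ is right, and the parallelism argument closes cleanly; it is arguably tidier than the paper's second-derivative bookkeeping (whose claim that differentiating twice ``replicates'' the $t_1$-terms elides a sign and a subtraction step), while the paper's version stays entirely in real trigonometric polynomials. Two minor points you may want to make explicit: first, that a nonempty relatively open subset of $F(E,d)\subseteq[0,\pi)^d$ really does contain an open box in $(0,\pi)^d$ (shrink toward a point with all coordinates positive), which is what licenses the differentiation; second, your parenthetical alternative via real-analyticity still owes a one-line check that $\sum_{k\neq j}a_k\sin(\theta_j-\theta_k)$ is not identically zero, though since it is only offered as an aside this does not affect the validity of the main argument.
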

\begin{proof}
First note that when $d=1$, $G=F(E,1)$ and so there is nothing to show. Thus WLOG assume $d \geq 2$. For any $(a_1,\dots, a_d) \in \mathbb{Z}^d$ with
two or more $a_j$ nonzero, define $A_{(a_1,\dots,a_d)} = \{ (u_1,\dots,u_d) \in F(E,d) | \sum_{j=1}^d a_j u_j \in S^1 \}$.
Note that the function $f_{(a_1,\dots,a_d)} (u_1,\dots,u_d) = a_1u_1 + \dots + a_d u_d:  F(E,d) \to \mathbb{R}^2$ is continuous and so
$A_{(a_1,\dots,a_d)} = f_{(a_1,\dots,a_d)}^{-1}(S^1)$ is hence a closed subset of $F(E,d)$. We now show that $A_{(a_1,\dots,a_d)}$ is nowhere dense
in $F(E,d)$. Toward this let $U \subseteq F(E,d)$ where $U$ is nonempty and open. We have to show that there exists $(v_1,\dots,v_d) \in U$ such that
$(v_1,\dots,v_d) \notin A_{(a_1,\dots,a_d)}$.

Setting $\theta_{jk}$ to be the angle between $v_j$ and $v_k$, we have $(v_1,\dots,v_d) \in A_{(a_1,\dots,a_d)}$ if and only if
$\sum_{j < k} 2a_ja_k cos(\theta_{jk}) = 1 - \sum_{j=1}^d a_j^2$.

Now note if we parametrize $E \subset S^1$ by the principal argument and $t_j \in [0,\pi)$ is the argument of $v_j$, we have $\theta_{ij}=t_i-t_j$ and so the last equation can be written as:
$$
2\sum_{i < j }^d a_i a_j cos(t_i-t_j)= 1-a_1^2 - \dots - a_d^2.
$$
Suppose contrary to what we want to show that $U \subseteq A_{(a_1,\dots,a_d)}$ then the equation above holds for all choices of $t_1, \dots, t_d$
in an nonempty open subset of $F([0,\pi),d) \subseteq [0,\pi)^d$ and so we may differentiate it with respect to any of the variables $t_j$ on this open set. Differentiating this last identity with respect to $t_1$ twice would kill all terms not involving $t_1$ and replicate those involving $t_1$ yielding:
$$
\sum_{1 < j}^d a_1 a_j cos(t_1-t_j) = 0
$$
and thus imply that the net contribution of all terms involving $t_1$ in the original identity is zero! Continuing in this way with $t_2$ and then $t_3$ etc., we see that the identity can only hold for a nonempty open set of $(t_1,\dots, t_d)$ if and only if its left hand side is identically zero. This yields
$$
0 = 1 - a_1^2 - \dots - a_d^2
$$
which is a contradiction as at least two of the $a_j$'s are nonzero integers and so the right hand side is negative.

Thus $U \not\subset A_{(a_1,\dots,a_d})$ and we can conclude that $A_{(a_1,\dots,a_d)}$ is a nowhere dense closed set.

As $G=F(E,d) - \cup_{(a_1,\dots,a_d) \in \mathbb{Z}^d} A_{(a_1,\dots,a_d)}$ where the union is taken only over $d$-tuples in $\mathbb{Z}^d$ with at least two coordinates nonzero, we conclude $G$ is a dense $G_{\delta}$ set as its complement is a countable union of nowhere dense closed sets in the Baire space $F(E,d)$ (see \cite{Mu}).

\end{proof}

Let $\mathbb{Z}^d$ denote the integer lattice graph whose vertices are given by the set $\mathbb{Z}^d$ and where there is an edge joining
$(u_1,\dots,u_d)$ with $(v_1,\dots,v_d)$ if and only if they differ in exactly one coordinate and in this coordinate the entries differ by $\pm 1$.
Alternatively $\hat{u}$ and $\hat{v}$ are joined by an edge exactly when $\|\hat{u}-\hat{v}\|_1 = 1$ where $\| \cdot \|_1$ is the $\ell^1$-norm.

Note that we may regard these graphs as nested $\mathbb{Z}^1 \subset \mathbb{Z}^2 \subset \mathbb{Z}^3 \subset \dots$ and
we define $\mathbb{Z}^{\infty}$ as their union graph, the integer lattice graph on countably infinitely many coordinates. The vertices of this graph are eventually
zero integer sequences and two such sequences $\hat{u}, \hat{v}$ are joined by an edge if $\|\hat{u}-\hat{v}\|_1=1$.

The next theorem relates $T(n)$, the maximum number of unit distances determined by $n$ points in the Euclidean plane with $\mathbb{Q}$-independent
unit distance set, with the maximum number of edges that $n$ points in $\mathbb{Z}^{\infty}$ can determine.

\begin{thm}
\label{thm: Zinf}
$T(n)$ is equal to the maximum number of edges that $n$ points in $\mathbb{Z}^{\infty}$ can determine. It is also the maximum number of edges
that $n$ points in $\mathbb{Z}^{n-1}$ can determine.
\end{thm}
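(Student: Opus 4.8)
The plan is to prove the two claimed equalities by a pair of inequalities each, using Lemma~\ref{lem: basicpicture} and the theory of good direction sets developed above. Write $M_\infty(n)$ for the maximum number of edges $n$ vertices determine in $\mathbb{Z}^\infty$, and $M_{n-1}(n)$ for the corresponding maximum in $\mathbb{Z}^{n-1}$. Since $\mathbb{Z}^{n-1} \subseteq \mathbb{Z}^\infty$ as an induced subgraph, we trivially have $M_{n-1}(n) \leq M_\infty(n)$, so it suffices to show $T(n) \leq M_{n-1}(n)$ and $M_\infty(n) \leq T(n)$; chaining these gives $T(n) \leq M_{n-1}(n) \leq M_\infty(n) \leq T(n)$, forcing all three to be equal.

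For the inequality $T(n) \leq M_{n-1}(n)$, I would start from an extremal configuration $A$ of the type guaranteed by Lemma~\ref{lem: basicpicture}: $(0,0)\in A$, the unit direction graph is connected, $d = |D_A| \leq n-1$, and $A \subseteq \mathbb{Z}(D_A) \cong \mathbb{Z}^d$. The point is that the $\mathbb{Q}$-independence of $D_A$ forces every unit-distance pair in $A$ to differ by $\pm$ a single element of $D_A$: if $x-y$ has unit length and $x,y \in \mathbb{Z}(D_A)$, then $x - y = \sum a_j u_j$ with $\|\sum a_j u_j\| = 1$, and one checks (exactly as in the good-direction argument) that $\mathbb{Q}$-independence alone forces all but one $a_j$ to vanish with that one equal to $\pm 1$ — indeed otherwise one rewrites $(a_1\mp 1)u_1 + \cdots = \mp u_1$ and obtains a nontrivial rational relation among the $u_j$. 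Hence, identifying $\mathbb{Z}(D_A)$ with $\mathbb{Z}^d$ via the basis $D_A$, the unit-distance graph on $A$ is precisely the induced subgraph of the lattice graph $\mathbb{Z}^d$ on the image of $A$. Since $d \leq n-1$, we may view this point set inside $\mathbb{Z}^{n-1}$, which shows $T(n)$ edges are realized there, i.e. $T(n) \leq M_{n-1}(n)$.

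For the reverse direction $M_\infty(n) \leq T(n)$, take any $n$ vertices $v_1,\dots,v_n \in \mathbb{Z}^\infty$ achieving $M_\infty(n)$ edges. Since these are eventually-zero sequences, they live in some finite $\mathbb{Z}^D$. By Theorem~\ref{thm: gooddir}, the set $G$ of good direction sets is dense in $F(E,D)$, so we may pick a good direction set $(u_1,\dots,u_D)$. Map $\mathbb{Z}^D$ into $\mathbb{R}^2$ by sending the standard basis vector $e_j$ to $u_j$, i.e. send $(a_1,\dots,a_D)$ to $\sum_j a_j u_j$; the proposition on good direction sets shows this map is injective on $\mathbb{Z}^D$ and carries $\ell_1$-distance-one pairs exactly to Euclidean-distance-one pairs. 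Thus the image of $\{v_1,\dots,v_n\}$ is an $n$-point planar set with $\mathbb{Q}$-independent unit direction set (its unit directions form a subset of the good, hence $\mathbb{Q}$-independent, set $\{u_1,\dots,u_D\}$) realizing exactly $M_\infty(n)$ unit distances, whence $M_\infty(n) \leq T(n)$. Combining the two bounds completes the proof.

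The main obstacle is the first inequality — specifically, the verification that in an extremal configuration every unit-distance pair must be a $\pm$ basis difference. This is where $\mathbb{Q}$-independence is genuinely used (a mere good direction set would make it immediate, but an extremal $D_A$ is only assumed $\mathbb{Q}$-independent, not good), and one has to run the clearing-denominators / rewriting argument carefully to rule out a nontrivial integer combination of length one; everything else is bookkeeping about induced subgraphs and the already-established dictionary between good lattices and $\ell_1$-graphs.
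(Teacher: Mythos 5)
Your overall architecture --- the chain $T(n)\le M_{n-1}(n)\le M_\infty(n)\le T(n)$, with Lemma~\ref{lem: basicpicture} feeding the first inequality and a good direction set from Theorem~\ref{thm: gooddir} feeding the last --- is exactly the paper's argument, just organized more explicitly as a pair of inequalities, and your second half ($M_\infty(n)\le T(n)$, realizing a lattice configuration in the plane via a good direction set) is carried out correctly.

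The one step whose justification does not work as written is the central claim in your first inequality: ``if $x-y$ has unit length and $x,y\in\mathbb{Z}(D_A)$, then \dots\ $\mathbb{Q}$-independence alone forces all but one $a_j$ to vanish.'' As a statement about arbitrary unit-length vectors of the lattice this is false: the paper's own example $\{1,\xi\}$ with $\xi=e^{2\pi i/3}$ is $\mathbb{Q}$-independent, yet $1\cdot 1+1\cdot\xi=-\xi^2$ has unit length with two nonzero coefficients. Your proposed rewriting $(a_1\mp 1)u_1+\cdots=\mp u_1$ only asserts that the left-hand side lies on $S^1$; it does not produce a rational relation, so it proves nothing --- that manipulation is the one used in the paper to show \emph{good} implies $\mathbb{Q}$-independent, and it needs goodness, which an extremal $D_A$ need not have. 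What saves the argument, and what the paper actually uses, is that you are not looking at an arbitrary unit-length lattice vector but at a difference of two points of $A$: if $x,y\in A$ and $\|x-y\|=1$, then by the very definition of the direction set, $x-y\in U_A=D_A\cup(-D_A)$, i.e.\ $x-y=\pm u_k$ for some $u_k\in D_A$. Equating $\sum_j a_j u_j=\pm u_k$ now yields a genuine $\mathbb{Q}$-linear relation among the elements of $D_A$ unless $a_j=0$ for all $j\ne k$ and $a_k=\pm 1$. With that one-sentence correction the unit-distance graph of $A$ coincides with the induced $\ell_1$-subgraph (the converse direction being immediate since each $u_j$ has unit length), the first inequality goes through, and your proof is complete.
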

\begin{proof}
When $n=1$, $T(1)=0$ and there is nothing to show so assume $n \geq 2$. By Lemma~\ref{lem: basicpicture}, we may take $A$ an extremal set of $n$
points such that $A$ is a subset of the $\mathbb{Z}$-span of $D_A$ and hence is a subset of $\mathbb{Z}^d \subseteq \mathbb{Z}^{n-1}$.
However if $D_A$ is not a good set of directions, there can be some pairs $\hat{u}, \hat{v}$ in this ``lattice'' which have Euclidean unit distance apart but which have
$\|\hat{u} - \hat{v}\|_1 \neq 1$. For these pairs, we cannot have both members of the pair in $A$ as that would generate a unit difference vector (which is not one of the unit basis vectors of the lattice) which
is an integral combination of the basis unit vectors indicating a $\mathbb{Z}$-linear dependency in $D_A$. This contradicts that $A$ has rationally independent
unit direction vectors. Thus the placement of the $n$ points of $A$ in $\mathbb{Z}[D_A] \cong \mathbb{Z}^d$ must avoid placing two points of $A$ in these sorts of pair locations. Subject to these restrictions, $A$ is a placement of $n$ points in the standard $\mathbb{Z}^d$ graph that maximizes the number of edge connections.

On the other hand we know by Theorem~\ref{thm: gooddir}, that we can find $(u_1,\dots,u_d)$ a set of good directions that generate a ``good lattice''
$\mathbb{Z}^d$ and an arbitrary placement of $n$ points in this lattice will yield a set $B \subset \mathbb{R}^2$ with $D_B \subseteq \{u_1, \dots, u_d \}$ and hence rationally independent unit directions.

Comparing the situation for sets $A$ and $B$ we see that they are both subsets of the standard $\mathbb{Z}^d$ integer lattice graph but $A$ is constrained while $B$ is not, in the sense we may choose $B$ to correspond to $A$ under the isomorphism of these lattices or we could choose $B$ to be something else. Thus we see that we never lose if we choose $B$ inside a lattice generated by a good set of directions so as to maximize the number of edges
determined by $B$ within the graph $\mathbb{Z}^d$. Furthermore because the set of directions is good, the edge count within the standard $\mathbb{Z}^d$-lattice is the same as the unit distance count determined by the set $B$, i.e., two points in the lattice are a unit distance apart in the Euclidean metric if and only if
they are unit distance apart in the $\ell^1$-metric.

Thus extremal configurations for the Euclidean unit distance problem correspond to extremal configurations within the standard lattice $\mathbb{Z}^{\infty}$
or in fact
$\mathbb{Z}^d$ for some $d \leq n-1$.

Note also: given a set $C$ of $n$ points in the standard integer lattice $\mathbb{Z}^{\infty}$ that maximize edge count, we may translate one to the origin
and argue once again by extremality for path connectedness of the unit distance graph. From this it follows easily that there are at most $n-1$ coordinates for which the elements
of $C$ can have nonzero entries in these coordinates, i.e., $C$ can be viewed as a subset of $\mathbb{Z}^{n-1}$.

\end{proof}

\section{Edge maximizing configurations in $\mathbb{Z}^d$ and $\{0,1\}^d$}
\label{sec: edgemax}

 In the following, an induced subgraph on a set of vertices V refers to the subgraph within a given graph whose vertex set is $V$ and whose edge set is obtained by taking all the edges joining the vertices in $V$ in the ambient graph. Throughout this section let $\mathbb{Z}^d$ denote the standard $d$-dimensional integer lattice graph whose vertex set is $\mathbb{Z}^d$ and
where two integer vectors are adjacent if and only if they differ in exactly one coordinate by $\pm 1$. Furthermore let $\{0,1\}^l \times \mathbb{Z}^d \subset \mathbb{Z}^{d+l}$ and $\{0,1\}^d \subset \mathbb{Z}^d$ be given the induced subgraph structures also.

We seek to find edge maximizing configurations in the sense that we would like to place $n$ points in {\bf some} lattice $\mathbb{Z}^m$ so that the
number of edge connections amongst those points is maximized. The first lemma and its corollary shows that any such edge maximizing configuration can
be found inside the hypercube graph $\{0,1\}^d$ for some $d$.

Let $V \subseteq \mathbb{Z}^d$ be a finite set of vertices, we can and will always translate $V$ so that the number of edges it determines is unchanged and such that
$0 \in V$ and all $(x_1,\dots, x_d) \in V$ have $x_i \geq 0$ for all $i=1,\dots, d$. We then define $M_j(V) = \max_{v \in V}\{x_j\}$ and $M(V)=\max_{j \in \{1,\dots,d\}} M_j$.
Note that $M(V)$ is a non-negative integer.

\begin{lem}
\label{lem: Zd to Zd+1}
Let $V \subset \mathbb{Z}^d$ be a finite set and $G=(V,E)$ be the induced subgraph of $\mathbb{Z}^d$ and $M(V) \geq 2$ where $M(V)$
is the quantity defined in the previous paragraph. Then there exists $V' \subset \{0,1\}^l \times \mathbb{Z}^d$ and induced subgraph $G'=(V',E')$ of $\{0,1\}^l \times \mathbb{Z}^d$ with $|V|=|V'|$ and $|E|=|E'|$. Furthermore $M(V') = M(V)-1$.
\end{lem}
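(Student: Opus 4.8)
The plan is to shrink the largest coordinate by an ``accordion fold'' applied to each coordinate of $V$ that attains the record value $M := M(V)$. The one-dimensional gadget is the map $\psi \colon \{0,1,\dots,M\} \to \{0,1\} \times \{0,1,\dots,M-1\}$ given by $\psi(k) = (0,k)$ for $0 \le k \le M-1$ and $\psi(M) = (1,M-1)$. The first thing I would check is that $\psi$ is an \emph{induced} subgraph embedding of the path $P_{M+1}$ (vertex set $\{0,\dots,M\}$, consecutive integers adjacent) into the grid graph on $\{0,1\}\times\{0,\dots,M-1\}$: this is a one-line case split (both arguments $\le M-1$, versus one argument equal to $M$) showing that $\psi(k)$ and $\psi(k')$ differ by $1$ in exactly one coordinate precisely when $|k-k'|=1$. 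The hypothesis $M \ge 2$ enters here only to guarantee that the freshly created binary coordinate, whose maximum is $1$, does not itself exceed $M-1$.

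Next I would let $J = \{\, j : M_j(V) = M \,\}$, put $l = |J|$ (so $1 \le l \le d$), and let $\Phi \colon \mathbb{Z}^d \to \{0,1\}^l \times \mathbb{Z}^d$ be the map that applies $\psi$ in each coordinate $j \in J$ (collecting the $l$ new binary coordinates into the $\{0,1\}^l$ factor) and is the identity in the remaining coordinates; then set $V' = \Phi(V)$. Because the ambient lattice is a Cartesian product of path and line graphs, and a Cartesian product of induced subgraph embeddings is again an induced subgraph embedding — adjacency in a Cartesian product meaning that two tuples differ along exactly one factor, by an edge of that factor — the map $\Phi$ carries the induced subgraph on $V$ isomorphically onto the induced subgraph $G' = (V',E')$ on $V'$; in particular $|V'| = |V|$ and $|E'| = |E|$. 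Finally I would tally coordinate maxima: every coordinate $j \notin J$ still has maximum $\le M-1$; every folded coordinate now has maximum exactly $M-1$ (the value $M$ occurred in it, and $\psi$ sends $M$ to second coordinate $M-1$); and every new binary coordinate has maximum $1 \le M-1$. Hence $M(V') = M-1$ as required, and $V' \subset \{0,1\}^l\times\mathbb{Z}^d \subset \mathbb{Z}^{d+l}$.

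The only genuine work will be the verification that $G'$ is \emph{exactly} the induced subgraph on $V'$ — that $\Phi$ creates no new edges and destroys none — and this can be handled either by the product-of-induced-embeddings principle just invoked, or, unpacked by hand, by the observation that two lattice points are adjacent iff they agree in all but one coordinate (where they differ by $\pm 1$), so that every potential edge ``lives in'' a single coordinate block and the behaviour of $\psi$ (or of the identity) on that one block decides the matter. Once that is in place, the remaining assertions — injectivity of $\Phi$, the coordinate-maximum count, the containment in $\{0,1\}^l\times\mathbb{Z}^d$ — are immediate. I expect to present $\psi$ and its induced-embedding property as a short preliminary claim, and then keep the adjacency bookkeeping confined to the two slots of the block where the fold happens.
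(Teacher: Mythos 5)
Your proposal is correct and is essentially the paper's own argument: your fold $\psi$ (sending $k\mapsto(0,k)$ for $k\le M-1$ and $M\mapsto(1,M-1)$) is exactly the paper's map $(M,a_2,\dots,a_d)\mapsto(1,M-1,a_2,\dots,a_d)$, $(b_1,\dots,b_d)\mapsto(0,b_1,\dots,b_d)$ applied in one coordinate. The only difference is organizational — you treat all record-attaining coordinates simultaneously via a Cartesian-product argument, while the paper folds one coordinate at a time and iterates — and both versions of the bookkeeping go through.
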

\begin{proof}
Let $M$ denote $M(V)$ throughout. Without loss of generality we assume $M_1(V)=M \geq 2$. Let $V_1$ be the set of vertices in $V$ with $x_1=M$ and let $V_0=V\setminus V_1$. Note that $v=(M,a_2,\cdots,a_d)\in V_1$ shares an edge with $w=(b_1,b_2,\cdots,b_d)\in V_0$ if and only if $b_1=M-1$ and $b_j=a_j$ for $j=2,\cdots,d$.

We map the elements of $V$ into $\{0,1\} \times \mathbb{Z}^d$ as follows: \\
if $v=(M,a_2,\cdots,a_d)\in V_1$, then $v\rightarrow (1,M-1,a_2,\cdots,a_d)$ and if $w=(b_1,\cdots,b_d) \in V_0$, then $w\rightarrow (0,b_1,\cdots,b_d)$. Let $G'=(V',E')$ be the graph resulting from this mapping. By construction $|V'|=|V|$. We see that $v\in V_1$ and $w\in V_0$ share in edge in $G$ if and only if their images share an edge in $G'$. Moreover, we also see that $v_1, v_2 \in V_1$ share an edge in $G$ if and only if their images share an edge in $G'$ and likewise $w_1, w_2 \in V_0$ share an edge in $G$ if and only if their images share an edge in $G'$. So $|E'|=|E|$. We have $M_2(V')=M_1(V)-1$ and $M(V)-1\leq M(V') \leq M(V)$. At this point, if $M(V')=M(V)-1$ we are done. Otherwise, there is another coordinate $x_j$ with $M_j(V')=M(V)$ and we repeat the process for this coordinate. Continuing in this fashion eventually results in a graph with $M(V')=M(V)-1$ since there are at most $d$ possible (original) coordinates to consider.

\end{proof}

\begin{cor}
\label{cor: Zd to unit cube}
Let $G=(V,E)$ be a finite induced subgraph of the standard integer lattice graph $\mathbb{Z}^d$. Then there exists an induced subgraph $G'=(V',E')$ on $\{0,1\}^{d'}$ for some $d'\in \mathbb{N}$, with $|V|=|V'|$ and $|E|=|E'|$.
\end{cor}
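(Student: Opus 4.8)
The plan is to induct on the non-negative integer $M(V)$, repeatedly invoking Lemma~\ref{lem: Zd to Zd+1} to decrease $M$ by one at the cost of passing to a higher-dimensional integer lattice, until $M$ drops to at most $1$, at which point the configuration already lies in a $0/1$-cube.

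For the base case, suppose $M(V) \leq 1$. After the canonical translation that places $0 \in V$ and makes all coordinates non-negative, every vertex of $V$ has all its coordinates in $\{0,1\}$, so $V \subseteq \{0,1\}^d$ and we may take $d' = d$ and $(V',E') = (V,E)$, with nothing to prove.

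For the inductive step, suppose $M(V) = m \geq 2$ and that the statement holds for every finite induced subgraph of an integer lattice whose $M$-value is strictly less than $m$. Apply Lemma~\ref{lem: Zd to Zd+1} to obtain a vertex set $W \subseteq \{0,1\}^l \times \mathbb{Z}^d \subseteq \mathbb{Z}^{d+l}$ together with the induced subgraph $H = (W,F)$ of $\{0,1\}^l \times \mathbb{Z}^d$ satisfying $|W| = |V|$, $|F| = |E|$, and $M(W) = m-1$. Since $\{0,1\}^l \times \mathbb{Z}^d$ was given the induced subgraph structure inherited from $\mathbb{Z}^{d+l}$, the graph $H$ is equally the induced subgraph of $\mathbb{Z}^{d+l}$ on the vertex set $W$; thus $H$ is a finite induced subgraph of a standard integer lattice with $M(W) = m - 1 < m$. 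By the inductive hypothesis applied to $H$ there is an induced subgraph $G' = (V',E')$ of $\{0,1\}^{d'}$ for some $d' \in \mathbb{N}$ with $|V'| = |W| = |V|$ and $|E'| = |F| = |E|$, which completes the induction and hence the proof.

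The only point that requires care is the choice of induction parameter: the ambient dimension $d+l$ produced by Lemma~\ref{lem: Zd to Zd+1} exceeds $d$, so one cannot induct on dimension; instead one inducts on $M(V)$, which is a non-negative integer that strictly decreases at each application of the lemma and therefore furnishes a legitimate well-founded induction. Everything else is immediate: edge counts are preserved by the lemma, the intermediate graph is genuinely an induced subgraph of a standard integer lattice because $\{0,1\}^l \times \mathbb{Z}^d$ was set up with the induced structure, and once $M$ reaches $1$ the vertex set literally lies in $\{0,1\}^{d'}$.
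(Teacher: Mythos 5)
Your proposal is correct and is essentially the paper's own argument: the paper simply iterates Lemma~\ref{lem: Zd to Zd+1} a total of $M(V)-1$ times until $M(V')=1$, which is the same well-founded descent on $M(V)$ that you phrase as an induction. Your explicit handling of the base case $M(V)\leq 1$ and the remark that the induction parameter must be $M(V)$ rather than the ambient dimension are sensible clarifications but do not change the substance.
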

\begin{proof}
After $M(V)-1$ iterations of lemma~\ref{lem: Zd to Zd+1} we have $M(V')=1$ which implies the resulting graph is now an induced subgraph of $\{0,1\}^{d'}$.
\end{proof}

In particular, suppose $K$ is an extremal graph on $\mathbb{Z}^\infty$, i.e., $|V(K)|=n$ and $|E(K)|$ equals the maximum number of edges determined by $n$ points in $\mathbb{Z}^{\infty}$. The corollary tells us that there is a graph $K'$ on $\{0,1\}^\infty$ with same number of vertices and edges, hence
$$
\max_{\substack{G\in \mathbb{Z}^\infty \\ |V(G)|=n}} |E(G)| =|E(K)|=|E(K')|\leq \max_{\substack{G\in \{0,1\}^\infty \\ |V(G)|=n}} |E(G)|
$$
Since the inequality in the other direction is obvious we have established the following.

\begin{cor}
\label{cor: Zd max = unit cube max}
The maximum number of edges determined by $n$ points in the standard countably infinite integer lattice $\mathbb{Z}^{\infty}$ is equal to the maximum number of edges determined by $n$ points in the standard countably infinite hypercube graph $\{0,1\}^{\infty}$.
\end{cor}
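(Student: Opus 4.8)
The plan is to sandwich the two maxima against each other, so that the only real work has already been done in Lemma~\ref{lem: Zd to Zd+1} and its iteration. One direction is immediate: since $\{0,1\}^{\infty}$ is, by definition, an induced subgraph of $\mathbb{Z}^{\infty}$, every $n$-point configuration in $\{0,1\}^{\infty}$ is also an $n$-point configuration in $\mathbb{Z}^{\infty}$ determining the same number of edges; hence the maximum edge count over $n$-point subsets of $\{0,1\}^{\infty}$ is at most the maximum over $n$-point subsets of $\mathbb{Z}^{\infty}$.

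For the reverse inequality I would start from an $n$-point set $V$ achieving the maximum in $\mathbb{Z}^{\infty}$. Because $V$ is finite and its elements are eventually-zero integer sequences, only finitely many coordinates are ever nonzero on $V$; discarding the unused coordinates lets us view $V$ as a finite induced subgraph of $\mathbb{Z}^{d}$ for some $d$ (indeed $d\leq n-1$, by the path-connectedness argument at the end of the proof of Theorem~\ref{thm: Zinf}). Now apply Corollary~\ref{cor: Zd to unit cube}: it yields an induced subgraph $G'=(V',E')$ of some hypercube $\{0,1\}^{d'}$ with $|V'|=|V|=n$ and $|E'|=|E(V)|$. Viewing $\{0,1\}^{d'}$ as an induced subgraph of $\{0,1\}^{\infty}$ by padding with zeros, $V'$ becomes an $n$-point subset of $\{0,1\}^{\infty}$ determining $|E(V)|$ edges, so the maximum over $\{0,1\}^{\infty}$ is at least $|E(V)|$, which is the maximum over $\mathbb{Z}^{\infty}$. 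Combining the two inequalities gives the claimed equality, and if desired this can be recorded in a single chain of (in)equalities exactly as in the paragraph preceding the statement.

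I expect no genuine obstacle at this stage, since all the combinatorial content sits inside Lemma~\ref{lem: Zd to Zd+1} (which does the one-step reduction $\mathbb{Z}^d \leadsto \{0,1\}^l\times\mathbb{Z}^d$ preserving vertex and edge counts while strictly decreasing $M(V)$) and its iteration in Corollary~\ref{cor: Zd to unit cube}. The only point requiring care is the passage from $\mathbb{Z}^{\infty}$ to a finite-dimensional $\mathbb{Z}^{d}$, i.e., checking that an edge-maximizing $n$-point configuration is supported on finitely many coordinates; this is clear because such a configuration is a finite set of eventually-zero sequences.
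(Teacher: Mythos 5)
Your proposal is correct and follows essentially the same route as the paper: the paper also obtains the nontrivial inequality by taking an extremal configuration $K$ in $\mathbb{Z}^{\infty}$ and applying Corollary~\ref{cor: Zd to unit cube} to produce a configuration in $\{0,1\}^{\infty}$ with the same vertex and edge counts, and notes the reverse inequality is obvious from the inclusion of graphs. Your added remark about restricting an extremal set in $\mathbb{Z}^{\infty}$ to finitely many coordinates is a point the paper leaves implicit, and it is handled correctly.
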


This reduces the problem of finding $T(n)$ to the edge maximizing problem on $\{0,1\}^{\infty}$ which we now focus on. The solution which we give in theorem \ref{thm: 2 cube} is known, and was proved in \cite{Ber67}, \cite{Harp64}, \cite{Hart76}. We provide an alternate proof of this result for completeness and because we feel it is somewhat shorter and cleaner than those that have previously appeared. See also \cite{Bez94} for an overview of related isoperimetric problems.

We label the elements of the $d$ dimensional unit hypercube $\{0,1\}^d$ via binary representation. That is, let $v_j=(a_{d-1},a_{d-2},\dots,a_0)$ where $j=\sum_{k=0}^{d-1} a_k 2^k$, each $a_k \in \{0,1\}$. In the following, for any graph embedded in $\{0,1\}^d$ it is implied that vertices $v_i$ and $v_j$ are adjacent if and only if the Hamming distance between $v_i$ and $v_j$ is 1, i.e., if and only if $v_i$ and $v_j$ differ in exactly one coordinate.

For the sake of clarity, we use the following example to introduce some definitions we will use. Although they are introduced in this example, it should be clear how they are defined in general.
\subsection{Example: $d=2^5$}
\label{sec: example}
Consider the array for $(x_4,x_3,\cdots,x_0)\in \{0,1\}^5$:
$$
\begin{array}{cc|cc}
\begin{array}{c} \text{\Large{Block}}\\ \text{\Large{A}} \end{array} & \begin{array}{c} v_0=(0,0,0,0,0) \\ v_1= (0,0,0,0,1) \\ v_2=(0,0,0,1,0) \\ v_3=(0,0,0,1,1)\\ \vdots \\ v_7=(0,0,1,1,1)\end{array} & \begin{array}{c} v_8=(0,1,0,0,0) \\ v_9=(0,1,0,0,1) \\ v_{10}=(0,1,0,1,0) \\ v_{11}=(0,1,0,1,1)\\ \vdots \\ v_{15}=(0,1,1,1,1) \end{array} & \begin{array}{c} \text{\Large{Block}}\\ \text{\Large{B}} \end{array}
\\\hline
\begin{array}{c} \text{\Large{Block}}\\ \text{\Large{C}} \end{array} & \begin{array}{c} v_{16}=(1,0,0,0,0) \\ v_{17}=(1,0,0,0,1) \\ v_{18}=(1,0,0,1,0) \\ v_{19}=(1,0,0,1,1)\\ \vdots \\ v_{23}=(1,0,1,1,1)\end{array} & \begin{array}{c} v_{24}=(1,1,0,0,0) \\ v_{25}=(1,1,0,0,1) \\ v_{26}=(1,1,0,1,0) \\ v_{27}=(1,1,0,1,1)\\ \vdots \\ v_{31}=(1,1,1,1,1) \end{array}& \begin{array}{c} \text{\Large{Block}}\\ \text{\Large{D}} \end{array}
\end{array}
$$
The blocks are determined by using the leftmost two coordinates: Block A: $(0,0,*)$,
Block B: $(0,1,*)$, Block C: $(1,0,*)$, Block D: $(1,1,*)$.

Arrays of this form will also be used to represent induced subgraphs of the $5$ dimensional unit hypercube by putting a dark dot next to vertices
that occur in the subgraph.  For instance, the induced subgraph $T$ with vertex set
$$
V=\{v_0,v_1,v_9,v_{10},v_{11},v_{19},v_{23},v_{24},v_{25},v_{26},v_{27},v_{28},v_{29},v_{30}\}
$$ is represented by
$$
\begin{array}{cc|cc}
\begin{array}{c} \text{\Large{Block}}\\ \text{\Large{A}} \end{array} & \begin{array}{l} v_0=(0,0,0,0,0) \ \bullet \\ v_1=(0,0,0,0,1) \ \bullet\\ v_2=(0,0,0,1,0)\\ v_3=(0,0,0,1,1)\\ v_4=(0,0,1,0,0)\\ v_5=(0,0,1,0,1)\\ v_6=(0,0,1,1,0) \\ v_7=(0,0,1,1,1) \end{array} & \begin{array}{l} v_8=(0,1,0,0,0) \\ v_9=(0,1,0,0,1) \ \bullet \\ v_{10}=(0,1,0,1,0) \ \bullet \\ v_{11}=(0,1,0,1,1) \ \bullet \\ v_{12}=(0,1,1,0,0) \\ v_{13}=(0,1,1,0,1) \\ v_{14}=(0,1,1,1,0) \\ v_{15}=(0,1,1,1,1) \end{array} & \begin{array}{c} \text{\Large{Block}}\\ \text{\Large{B}} \end{array}
\\\hline
\begin{array}{c} \text{\Large{Block}}\\ \text{\Large{C}} \end{array} & \begin{array}{l} v_{16}=(1,0,0,0,0) \\ v_{17}=(1,0,0,0,1) \\ v_{18}=(1,0,0,1,0) \\v_{19}=(1,0,0,1,1) \ \bullet \\ v_{20}=(1,0,1,0,0) \\ v_{21}=(1,0,1,0,1) \\ v_{22}=(1,0,1,1,0) \\ v_{23}=(1,0,1,1,1) \bullet \end{array} & \begin{array}{l} v_{24}=(1,1,0,0,0) \ \bullet\\ v_{25}=(1,1,0,0,1)\ \bullet \\ v_{26}=(1,1,0,1,0) \ \bullet \\  v_{27}=(1,1,0,1,1) \ \bullet \\ v_{28}=(1,1,1,0,0) \ \bullet \\ v_{29}=(1,1,1,0,1) \ \bullet \\ v_{30}=(1,1,1,1,0) \ \bullet \\ v_{31}=(1,1,1,1,1) \end{array}& \begin{array}{c} \text{\Large{Block}}\\ \text{\Large{D}} \end{array}
\end{array}
$$
In a graph $G$, let $V_A(G)$ denote the vertices of $G$ in block A ($V_B(G), V_C(G), V_D(G)$ defined analogously). Note that each vertex in $V_D(G)$ shares an edge with at most one vertex in $V_C(G)$ (the element directly to the left). For instance, in the graph above $v_{19}$ and $v_{27}$ share an edge but there are no other vertices in $V_D$ sharing an edge with a vertex in $V_C$. We call these ``horizontal'' edges, as with edges connecting a vertex of $V_B$ with the vertex to the left in $V_A$. The horizontal edges in a graph $G$ will be denoted $E^{hor}(G)$. In the graph above, $|E^{hor}|=2$, the horizontal edges are $[v_{19},v_{27}]$ and $[v_1,v_9]$. Similarly, each vertex of $V_D(G)$ shares an edge with at most one vertex of $V_B(G)$. For instance, in the graph above $v_{25}$ shares an edge with $v_{9}$ but no other vertices in $V_B$. We call these ``vertical'' edges as with edges connecting a vertex of $V_C(G)$ with a vertex of $V_A(G)$. The vertical edges in a graph $G$ will be denoted $E^{vert}(G)$. In the graph above, $|E^{vert}|=3$, the vertical edges are $[v_{9},v_{25}], [v_{10},v_{26}], [v_{11},v_{27}]$. Let $V_{AB}(G)=V_A(G)\cup V_B(G)$ and define $V_{AC}(G), V_{BD}(G)$ and $V_{CD}(G)$ analogously. One can observe that the following inequalities hold in general:
\begin{eqnarray}
|E^{hor}(G)| &\leq& \min\{|V_{AC}(G)|, |V_{BD}(G)|\} \label{hor}
\\
|E^{vert}(G)| &\leq& \min\{|V_{AB}(G)|, |V_{CD}(G)|\} \label{vert}
\end{eqnarray}

Let $E_A(G)$ be the edges in the subgraph of $G$ induced by $V_A(G)$, and $E_B(G), E_C(G), E_D(G)$ defined analogously. Similarly, let $E_{AB}(G)$ be the edges in the subgraph of $G$ induced by $V_{AB}(G)$, and $E_{AC}(G), E_{BD}(G), E_{CD}(G)$ defined analogously. Note that the edge set $E(G)$ of a graph G can be written as a disjoint union as
\begin{eqnarray}
E(G) &=& E_{AB}(G) \cup E_{CD}(G) \cup E^{vert}(G) \label{union1}
\\
E(G) &=& E_{AC}(G) \cup E_{BD}(G) \cup E^{hor}(G) \label{union2}
\end{eqnarray}

We also make the following definitions. We say a graph is ``completely arranged'' if $V=\{v_0,v_1,\cdots,v_I\}$ for some $I$. We say a graph is ``horizontally arranged'' if $V_{AB}=\{v_0,v_1,\cdots,v_J\}$ and $V_{CD}=\{v_{16},v_{17},\cdots,v_{K}\}$ for some $J$ and $K$. We say a graph is ``vertically arranged'' if $V_{AC}$ is filled consecutively downward starting from $v_0$, and $V_{BD}$ is filled consecutively downward starting from $v_{8}$. Completely arranging a graph is replacing it by the completely arranged graph with the same number of vertices. Horizontally arranging a graph is replacing it by the horizontally arranged graph with the same $|V_{AB}|$ and $|V_{CD}|$. Vertically arranging a graph is replacing it by the vertically arranged graph with the same $|V_{AC}|$ and $|V_{BD}|$. For example, the graph $T$ above is neither vertically nor horizontally arranged. Horizontally arranging $T$ gives us
$$
\begin{array}{cc|cc}
\begin{array}{c} \text{\Large{Block}}\\ \text{\Large{A}} \end{array} & \begin{array}{l} v_0 \ \bullet \\ v_1 \ \bullet\\ v_2 \ \bullet\\ v_3\ \bullet \\ v_4 \ \bullet\\ v_5\\ v_6 \\ v_7 \end{array} & \begin{array}{l} v_8 \\ v_9 \\ v_{10} \\ v_{11} \\ v_{12} \\ v_{13} \\ v_{14} \\ v_{15} \end{array} & \begin{array}{c} \text{\Large{Block}}\\ \text{\Large{B}} \end{array}
\\\hline
\begin{array}{c} \text{\Large{Block}}\\ \text{\Large{C}} \end{array} & \begin{array}{l} v_{16}\ \bullet \\ v_{17}\ \bullet \\ v_{18}\ \bullet \\v_{19} \ \bullet \\ v_{20}\ \bullet \\ v_{21}\ \bullet \\ v_{22}\ \bullet \\ v_{23} \bullet \end{array} & \begin{array}{l} v_{24} \ \bullet\\ v_{25} \\ v_{26} \\  v_{27} \\ v_{28} \\ v_{29} \\ v_{30} \\ v_{31} \end{array}& \begin{array}{c} \text{\Large{Block}}\\ \text{\Large{D}} \end{array}
\end{array}
$$
vertically arranging $T$ gives us
$$
\begin{array}{cc|cc}
\begin{array}{c} \text{\Large{Block}}\\ \text{\Large{A}} \end{array} & \begin{array}{l} v_0 \ \bullet \\ v_1 \ \bullet\\ v_2 \ \bullet\\ v_3\ \bullet \\ v_4\\ v_5\\ v_6 \\ v_7 \end{array} & \begin{array}{l} v_8\ \bullet \\ v_9\ \bullet \\ v_{10}\ \bullet \\ v_{11}\ \bullet \\ v_{12}\ \bullet \\ v_{13}\ \bullet \\ v_{14}\ \bullet \\ v_{15}\ \bullet \end{array} & \begin{array}{c} \text{\Large{Block}}\\ \text{\Large{B}} \end{array}
\\\hline
\begin{array}{c} \text{\Large{Block}}\\ \text{\Large{C}} \end{array} & \begin{array}{l} v_{16} \\ v_{17}\ \\ v_{18} \\v_{19}\\ v_{20} \\ v_{21} \\ v_{22}\\ v_{23} \end{array} & \begin{array}{l} v_{24} \ \bullet\\ v_{25} \ \bullet \\ v_{26} \\  v_{27} \\ v_{28} \\ v_{29} \\ v_{30} \\ v_{31} \end{array}& \begin{array}{c} \text{\Large{Block}}\\ \text{\Large{D}} \end{array}
\end{array}
$$
and completely arranging $T$ gives us
$$
\begin{array}{cc|cc}
\begin{array}{c} \text{\Large{Block}}\\ \text{\Large{A}} \end{array} & \begin{array}{l} v_0 \ \bullet \\ v_1 \ \bullet\\ v_2 \ \bullet\\ v_3\ \bullet \\ v_4 \ \bullet\\ v_5\ \bullet\\ v_6\ \bullet \\ v_7\ \bullet \end{array} & \begin{array}{l} v_8\ \bullet \\ v_9\ \bullet \\ v_{10}\ \bullet \\ v_{11}\ \bullet \\ v_{12}\ \bullet \\ v_{13}\ \bullet \\ v_{14} \\ v_{15} \end{array} & \begin{array}{c} \text{\Large{Block}}\\ \text{\Large{B}} \end{array}
\\\hline
\begin{array}{c} \text{\Large{Block}}\\ \text{\Large{C}} \end{array} & \begin{array}{l} v_{16} \\ v_{17}\ \\ v_{18} \\v_{19}\\ v_{20} \\ v_{21} \\ v_{22}\\ v_{23} \end{array} & \begin{array}{l} v_{24} \\ v_{25} \\ v_{26} \\  v_{27} \\ v_{28} \\ v_{29} \\ v_{30} \\ v_{31} \end{array}& \begin{array}{c} \text{\Large{Block}}\\ \text{\Large{D}} \end{array}
\end{array}
$$

Let $\lambda_i$ be the graph automorphism of the hypercube graph $\{0,1\}^d$ that changes the $x_i$ coordinate of each vertex from 0 to 1 or 1 to 0 (for instance $(x_{d-1},x_{d-2},\cdots,x_i=1,\cdots,x_0)$ maps to $(x_{d-1},x_{d-2},\cdots,x_i=0,\cdots,x_0)$ and vice versa). Observe that applying $\lambda_4$ in this example interchanges block A with C and block B with D. In particular, $\lambda_4(T)$ (which changes the leftmost coordinate in the graph $T$) is given by
$$
\begin{array}{cc|cc}
\begin{array}{c} \text{\Large{Block}}\\ \text{\Large{A}} \end{array} & \begin{array}{l} v_0=(0,0,0,0,0) \\ v_1=(0,0,0,0,1) \\ v_2=(0,0,0,1,0)\\ v_3=(0,0,0,1,1) \ \bullet\\ v_4=(0,0,1,0,0)\\ v_5=(0,0,1,0,1)\\ v_6=(0,0,1,1,0) \\ v_7=(0,0,1,1,1) \ \bullet \end{array} & \begin{array}{l} v_8=(0,1,0,0,0) \ \bullet\\ v_9=(0,1,0,0,1) \ \bullet \\ v_{10}=(0,1,0,1,0) \ \bullet \\ v_{11}=(0,1,0,1,1) \ \bullet \\ v_{12}=(0,1,1,0,0) \ \bullet\\ v_{13}=(0,1,1,0,1) \ \bullet\\ v_{14}=(0,1,1,1,0)\ \bullet \\ v_{15}=(0,1,1,1,1) \end{array} & \begin{array}{c} \text{\Large{Block}}\\ \text{\Large{B}} \end{array}
\\\hline
\begin{array}{c} \text{\Large{Block}}\\ \text{\Large{C}} \end{array} & \begin{array}{l} v_{16}=(1,0,0,0,0) \ \bullet \\ v_{17}=(1,0,0,0,1) \ \bullet \\ v_{18}=(1,0,0,1,0) \\v_{19}=(1,0,0,1,1) \\ v_{20}=(1,0,1,0,0) \\ v_{21}=(1,0,1,0,1) \\ v_{22}=(1,0,1,1,0) \\ v_{23}=(1,0,1,1,1) \end{array} & \begin{array}{l} v_{24}=(1,1,0,0,0) \\ v_{25}=(1,1,0,0,1)\ \bullet \\ v_{26}=(1,1,0,1,0) \ \bullet \\  v_{27}=(1,1,0,1,1) \ \bullet \\ v_{28}=(1,1,1,0,0) \\ v_{29}=(1,1,1,0,1) \\ v_{30}=(1,1,1,1,0) \\ v_{31}=(1,1,1,1,1) \end{array}& \begin{array}{c} \text{\Large{Block}}\\ \text{\Large{D}} \end{array}
\end{array}
$$
Similarly, $\lambda_3(T)$ interchanges block A with B and C with D.

Let $\sigma_{i,j}$ be the graph automorphism of $\{0,1\}^d$ that interchanges the $x_i$ and $x_j$ coordinates of each vertex. Note that $\sigma_{3,4}(T)$ results in interchanging blocks B and C while blocks A and D are left alone.

We will utilize the facts that for any induced subgraph $G$, $|E(\lambda_i(G))|=|E(G)|$ and $|E(\sigma_{i,j}(G))|=|E(G)|$ in general as $\sigma_{i,j}$ and $\lambda_i$ are graph automorphisms
of the hypercube graph.

We are now ready to state and prove our result.

\begin{thm}
\label{thm: 2 cube}
Let $G$ be an induced subgraph of $\{0,1\}^d$ with vertices adjacent if and only if they are separated by a Hamming distance 1. Then $|E(G)|\leq |E(H)|$, where $H$ is the totally arranged graph with $|V(H)|=|V(G)|$, i.e., $V(H)=\{v_0,v_1,\cdots,v_{|V(G)|-1}\}$ where $v_j$ is defined by binary representation as above.
\end{thm}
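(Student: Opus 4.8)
The plan is to induct on the ambient dimension $d$. The base cases $d\le 1$ are trivial, so fix $d\ge 2$, let $G\subseteq\{0,1\}^d$ with $n:=|V(G)|\le 2^d$, and split $\{0,1\}^d$ into the four blocks determined by the two leftmost coordinates as in Section~\ref{sec: example}. Write $f(m)$ for the maximum number of edges determined by $m$ vertices in $\{0,1\}^{d-1}$; by the inductive hypothesis this maximum is attained by the completely arranged graph, and evaluating on that graph gives $f(m+1)-f(m)=H(m)$ (flipping a $1$-bit of $m$ to $0$ produces a smaller index, already present; flipping a $0$-bit to $1$ produces a larger one, absent).

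The first step is the two reduction lemmas. By the disjoint decomposition (\ref{union1}), the matching bound (\ref{vert}), and the inductive hypothesis applied to the $(d-1)$-dimensional subcubes spanned by $V_{AB}(G)$ and $V_{CD}(G)$,
$$
|E(G)| = |E_{AB}(G)| + |E_{CD}(G)| + |E^{\mathrm{vert}}(G)| \ \le\ f(n_0) + f(n_1) + \min\{n_0, n_1\},
$$
where $n_0=|V_{AB}(G)|$, $n_1=|V_{CD}(G)|$. The horizontally arranged graph with these same $n_0,n_1$ saturates all three bounds: its $AB$- and $CD$-parts are completely arranged in their subcubes, contributing $f(n_0)+f(n_1)$ internal edges, and since both parts are prefixes $v_0,v_1,\dots$ of the same binary order its vertical matching attains the full $\min\{n_0,n_1\}$. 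Hence \emph{horizontal arranging never decreases $|E|$}, and symmetrically (via (\ref{union2}) and (\ref{hor})) neither does vertical arranging. So we may assume $G$ is horizontally arranged, and, after applying $\lambda_{d-1}$ if necessary, that $n_0\ge n_1$; then $|E(G)| = f(n_0)+f(n_1)+n_1$.

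The completely arranged graph $H$ on $n$ vertices is horizontally arranged with $|V_{AB}(H)| = R := \min\{n,2^{d-1}\}$ and $|V_{CD}(H)| = n-R$, so it remains to show that $\varphi(n_0):=f(n_0)+f(n-n_0)+(n-n_0)$ is maximized over $\lceil n/2\rceil\le n_0\le R$ at $n_0=R$. Here $\varphi(m+1)-\varphi(m)=H(m)-H(n-1-m)-1$, which is not sign-definite (it equals $-k$ at $m=n_0=n_1=2^k$), so $\varphi$ is genuinely not monotone and one must compare at the endpoint directly: telescoping and reindexing the middle term by the reflection $m\mapsto n-1-m$, the claim $\varphi(R)\ge\varphi(n_0)$ reduces to the elementary inequality, \emph{for integers $a\ge b\ge 0$,}
$$
\sum_{j=a}^{a+b-1} H(j) \ \ge\ \sum_{j=0}^{b-1} H(j) \ +\ b
$$
(this handles the case $R=n$), together with the variant in which the left-hand window ends at the power of two $2^{d-1}-1$ rather than at $a+b-1$ (the case $R=2^{d-1}$, i.e.\ $n>2^{d-1}$).

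I expect this Hamming-weight inequality to be \textbf{the main obstacle}. It is sharp (equality at $a=b=2^k$) and the bound $H(a+i)\ge H(i)+1$ fails for individual $i$ (e.g.\ $a=1$, $i=3$), so one must exploit how carries accumulate rather than argue term by term. One clean route: via the carry identity $H(x+y)=H(x)+H(y)-c(x,y)$, where $c(x,y)$ counts the carries in the binary addition of $x$ and $y$, the inequality becomes $\sum_{i=0}^{b-1} c(a,i)\le b\,(H(a)-1)$; writing $c(a,i)=\#\{k\ge 0 : (a\bmod 2^{k+1})+(i\bmod 2^{k+1})\ge 2^{k+1}\}$ and summing over the bit positions $k$ first yields a bound that can be pushed through, and the truncated variant is handled the same way with the window pinned at a power of two. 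Once the inequality and its variant are established, the maximizing split is $n_0=R=\min\{n,2^{d-1}\}$, $n_1=(n-2^{d-1})^+$ — exactly the split realized by $H$ — so $|E(G)|\le|E(H)|$ and the induction closes.
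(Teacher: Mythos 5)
Your reduction is sound up to and including the claim that, after horizontally arranging and applying $\lambda_{d-1}$, the edge count equals $\varphi(n_0)=f(n_0)+f(n-n_0)+(n-n_0)$ with $n-n_0\le n_0\le\min\{n,2^{d-1}\}$; this portion coincides with the paper's use of (\ref{union1}), (\ref{vert}), (\ref{hor}), (\ref{union2}) and the inductive hypothesis. From that point on, however, your argument rests entirely on the superadditivity inequality $\sum_{j=a}^{a+b-1}H(j)\ \ge\ \sum_{j=0}^{b-1}H(j)+b$ for $a\ge b$, equivalently $f(a+b)\ge f(a)+f(b)+\min\{a,b\}$ for $f(m)=\sum_{j<m}H(j)$, and you do not prove it: you label it ``the main obstacle'' and offer only the carry-identity reformulation $\sum_{i<b}c(a,i)\le b\,(H(a)-1)$ together with the assertion that a sum over bit positions ``can be pushed through.'' That inequality is true, but it is the entire mathematical content of the theorem at this stage: it is tight infinitely often (not only at $a=b=2^k$ but e.g.\ $f(11)=f(7)+f(4)+4=f(6)+f(5)+5$), term-by-term bounds fail as you note, and the bit-position counts $\#\{i<b:(a\bmod 2^{k+1})+(i\bmod 2^{k+1})\ge 2^{k+1}\}$ do not aggregate to $b\,(H(a)-1)$ without a genuine induction on binary expansions. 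As written, this is a real gap, and it sits exactly where the difficulty of the theorem lives.

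It is worth saying that the route you chose is essentially that of the classical proofs (Bernstein, Hart), whose key lemma is precisely this superadditivity of $f$; the paper's proof is designed to avoid any such numerical lemma. After vertically arranging and normalizing with $\lambda_{d-2}$, the paper runs a short case analysis on whether block B is full and block C is empty, in which the only nontrivial move is an interchange of blocks C and D that visibly preserves $|E_{AB}|$, $|E_{CD}|$ and $|E^{vert}|$ (because $|V_D|\le|V_B|$ afterwards), followed by re-arranging; no comparison of $\varphi$ at different splits is ever made. To complete your version you should either prove the superadditivity lemma --- for instance by induction on $a+b$, splitting $a$ and $b$ by parity and using $f(2m)=2f(m)+m$ and $f(2m+1)=2f(m)+m+H(m)$ --- or switch to the paper's block-swap argument. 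One simplification available to you if you persist with your route: since $f(n)=f(2^{d-1})+f(n-2^{d-1})+(n-2^{d-1})$ whenever $2^{d-1}<n\le 2^d$, the target $\varphi(R)$ equals $f(n)$ in both of your endpoint cases, so the single inequality $f(n)\ge f(n_0)+f(n_1)+\min\{n_0,n_1\}$ suffices and no separate ``truncated variant'' pinned at $2^{d-1}-1$ is needed.
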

\begin{proof}
Consider the array for $(x_{d-1},x_{d-2},\cdots,x_0)\in \{0,1\}^d$, there are $2^{d-2}$ vertices in each block:
$$
\begin{array}{cc|cc}
\begin{array}{c} \text{\Large{Block}}\\ \text{\Large{A}} \end{array} & \begin{array}{l} v_0 \ \\ v_1 \\ \vdots \\ v_{2^{d-2}-1} \end{array} & \begin{array}{l} v_{2^{d-2}} \\ v_{2^{d-2}+1} \\ \vdots \\ v_{2^{d-1}-1} \end{array} & \begin{array}{c} \text{\Large{Block}}\\ \text{\Large{B}} \end{array}
\\\hline
\begin{array}{c} \text{\Large{Block}}\\ \text{\Large{C}} \end{array} & \begin{array}{l} v_{2^{d-1}} \ \\ v_{2^{d-1}+1} \\ \vdots \\ v_{2^{d-1}+2^{d-2}-1} \end{array} & \begin{array}{l} v_{2^{d-1}+2^{d-2}} \\ v_{2^{d-1}+2^{d-2}+1} \\ \vdots \\ v_{2^{d}-1} \end{array}& \begin{array}{c} \text{\Large{Block}}\\ \text{\Large{D}} \end{array}
\end{array}
$$
We proceed by induction on $d$, applying a sequence of operations that does not decrease the number of edges in the graph and results in the totally arranged graph with the same number of vertices. Fix $d\geq 2$, let $G$ be an induced subgraph of $\{0,1\}^d$, and assume the conclusion holds for $d-1$. The inductive hypothesis tells us that horizontally arranging $G$ does not decrease $|E_{AB}|$ nor $|E_{CD}|$. This is because each horizontal row is isomorphic as a
graph to $\{0,1\}^{d-1}$ to which we can apply the inductive hypothesis. Since we have equality in equation (\ref{vert}) in a horizontally arranged graph, it follows from equation (\ref{union1}) that horizontally arranging $G$ does not decrease the total number of edges. Similarly, the inductive hypothesis tells us that vertically arranging $G$ does not decrease $|E_{AC}|$ nor $|E_{BD}|$ and from equations (\ref{hor}) and (\ref{union2}) we see that vertically arranging $G$ does not decrease the total number of edges.

We first vertically arrange $G$. If $|V_{AC}|<|V_{BD}|$, apply $\lambda_{d-2}$ (interchanging block A with B and block C with D). This results in a vertically arranged graph $G'$ with $|V_{AC}|\geq |V_{BD}|$, i.e., of the form:
$$
\begin{array}{cc|cc}
\begin{array}{c} \text{\Large{Block}}\\ \text{\Large{A}} \end{array} & \begin{array}{l} v_0 \ \bullet \\ v_1\ \bullet \\ \vdots \\ v_{2^{d-2}-1} \ \bullet \end{array} & \begin{array}{l} v_{2^{d-2}}\ \bullet \\ v_{2^{d-2}+1}\ \bullet \\ v_{2^{d-2}+2}\ \bullet \\ \vdots \\ v_{2^{d-1}-1}\ \bullet \end{array} & \begin{array}{c} \text{\Large{Block}}\\ \text{\Large{B}} \end{array}
\\\hline
\begin{array}{c} \text{\Large{Block}}\\ \text{\Large{C}} \end{array} & \begin{array}{l} v_{2^{d-1}}\ \bullet \\ v_{2^{d-1}+1}\ \bullet \\ v_{2^{d-1}+2}\bullet \\ \vdots \\ v_{2^{d-1}+2^{d-2}-1} \end{array} & \begin{array}{l} v_{2^{d-1}+2^{d-2}}\ \bullet \\ v_{2^{d-1}+2^{d-2}+1}\ \bullet \\ v_{2^{d-1}+2^{d-2}+2} \\ \vdots \\ v_{2^{d}-1} \end{array}& \begin{array}{c} \text{\Large{Block}}\\ \text{\Large{D}} \end{array}
\end{array}
$$
We now proceed by cases on the form of $G'$.

\begin{enumerate}

\item[Case 1:] Block B is full\\
By the form of $G'$, we know block A is full. We can then horizontally arrange, which results in a totally arranged graph.

\item[Case 2:] Block B is not full\\
By the form of $G'$, we know block $D$ is empty.\\
\begin{enumerate}

\item[2(a):] Block C is empty\\
Horizontally arranging results in a totally arranged graph.

\item[2(b):] Block C is non-empty\\
By the form of $G'$ we know block A is full. If $|V_B|<|V_C|$, we apply $\sigma_{d-2,d-1}$ (interchanging blocks B and C). If this results in B full or C empty the graph is totally arranged (since A is full and D is empty here). Otherwise, we now have the vertically arranged form:
$$
\begin{array}{cc|cc}
\begin{array}{c} \text{\Large{Block}}\\ \text{\Large{A}} \end{array} & \begin{array}{l} \text{\Large{FULL}} \end{array} & \begin{array}{l} \text{\Large{partially filled}} \end{array} & \begin{array}{c} \text{\Large{Block}}\\ \text{\Large{B}} \end{array}
\\[5mm]\hline
\begin{array}{c} \\ \text{\Large{Block}}\\ \text{\Large{C}} \end{array} & \begin{array}{l} \\ \text{\Large{partially filled}} \\ \text{\Large{with }} |V_C|\leq |V_B| \end{array} & \begin{array}{l} \\ \text{\Large{EMPTY}} \end{array}& \begin{array}{c} \\ \text{\Large{Block}}\\ \text{\Large{D}} \end{array}
\end{array}
$$
From equation (\ref{union1}) we see that $|E|=|E_{AB}|+|E_{CD}|+|E^{vert}|=|E_{AB}|+|E_{CD}|+|V_C|$ in this form. We now interchange blocks $C$ and $D$ giving us a graph of the form:
$$
\begin{array}{cc|cc}
\begin{array}{c} \text{\Large{Block}}\\ \text{\Large{A}} \end{array} & \begin{array}{l} \text{\Large{FULL}} \end{array} & \begin{array}{l} \text{\Large{partially filled}} \end{array} & \begin{array}{c} \text{\Large{Block}}\\ \text{\Large{B}} \end{array}
\\[5mm] \hline
\begin{array}{c} \\ \text{\Large{Block}}\\ \text{\Large{C}} \end{array} & \begin{array}{l} \\ \text{\Large{EMPTY}} \end{array} & \begin{array}{l} \\ \text{\Large{partially filled}} \\ \text{\Large{with }} |V_D|\leq |V_B| \end{array} & \begin{array}{c} \\ \text{\Large{Block}}\\ \text{\Large{D}} \end{array}
\end{array}
$$
Clearly this operation did not change $|E_{AB}|$ and $|E_{CD}|$ and observe that we now have $|E|=|E_{AB}|+|E_{CD}|+|E^{vert}|=|E_{AB}|+|E_{CD}|+|V_D|$ since $|V_D|\leq |V_B|$. Thus we see that the number of edges was preserved by this operation.

We now vertically arrange the graph. If this does not completely fill block B, we have both blocks C and D empty and the graph is totally arranged. If this does completely fill block B we can horizontally arrange which results in a totally arranged graph.

\end{enumerate}
\end{enumerate}

We see that in all cases, the graph can be totally arranged without reducing the number of edges.
\end{proof}

\begin{cor}
\label{cor: Zd to optimal cube}
Let $G=(V,E)$ be a finite induced subgraph of some standard integer lattice $\mathbb{Z}^{m}$ with $|V|=n$. Let $2^{d-1}<n\leq 2^{d}$. Then $|E(G)|\leq |E(H)|$, where $H$ is the totally arranged graph on $\{0,1\}^{d}$ with $|V(H)|=n$, i.e., $V(H)=\{v_0,v_1,\cdots,v_{n-1}\}$ where $v_j$ is defined by binary representation as above.
\end{cor}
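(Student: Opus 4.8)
The plan is to assemble the lattice-to-cube reduction of Corollary~\ref{cor: Zd to unit cube} with the hypercube optimality of Theorem~\ref{thm: 2 cube}, and then to observe that the ambient dimension of the cube is immaterial once it is at least $d=\ceil{\log n}$. The cases $n\le 2$ are immediate (at most one edge on each side), so assume $n\ge 3$; then $2^{d-1}<n\le 2^d$ forces $d\ge 2$. First I would apply Corollary~\ref{cor: Zd to unit cube} to $G=(V,E)\subseteq\mathbb{Z}^{m}$ to obtain an induced subgraph $G'=(V',E')$ of $\{0,1\}^{d'}$, for some $d'\in\mathbb{N}$, with $|V'|=|V|=n$ and $|E'|=|E|$. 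Since $\{0,1\}^{d'}$ has exactly $2^{d'}$ vertices, $2^{d'}\ge n>2^{d-1}$, so $d'\ge d\ge 2$.

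Next I would apply Theorem~\ref{thm: 2 cube} to $G'$ inside $\{0,1\}^{d'}$, which yields the totally arranged graph $H'$ on $\{0,1\}^{d'}$ with $|V(H')|=n$---that is, $V(H')=\{v_0,\dots,v_{n-1}\}$ with the $v_j$ labelled by their $d'$-bit binary expansions---and $|E'|\le |E(H')|$. Finally, because $n\le 2^d$, every index $j\in\{0,\dots,n-1\}$ has binary expansion supported on the $d$ lowest bits, so each $v_j\in V(H')$ satisfies $x_d=x_{d+1}=\cdots=x_{d'-1}=0$; under the zero-padding inclusion $\{0,1\}^{d}\hookrightarrow\{0,1\}^{d'}$, the induced subgraph $H'$ is then identified with the totally arranged graph $H$ on $\{0,1\}^{d}$ with $n$ vertices, so $|E(H')|=|E(H)|$. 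Chaining the three relations gives $|E(G)|=|E'|\le|E(H')|=|E(H)|$, as claimed.

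Since this is purely an assembly of results already proved, there is no real obstacle. The only step needing a moment's care is the last one: one must check that enlarging the hypercube beyond dimension $\ceil{\log n}$ neither creates nor destroys any edge among the first $n$ lexicographically-ordered vertices, and this is exactly what the hypothesis $n\le 2^d$ guarantees, since those vertices never turn on a coordinate of index $\ge d$.
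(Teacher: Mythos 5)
Your proof is correct and follows the paper's own route exactly: the paper proves this corollary in one line by combining Corollary~\ref{cor: Zd to unit cube} with Theorem~\ref{thm: 2 cube}. Your additional care about reconciling the dimension $d'$ produced by the reduction with the target dimension $d=\ceil{\log n}$ (via zero-padding, justified by $n\le 2^d$) is a detail the paper leaves implicit, and you handle it correctly.
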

\begin{proof}
This follows from corollary \ref{cor: Zd to unit cube} and theorem \ref{thm: 2 cube}.
\end{proof}

So the totally arranged graphs are extremal configurations which maximize edge count in $\{0,1\}^{\infty}$ and $\mathbb{Z}^{\infty}$ for a given number
of vertices. Thus counting their edges gives us $T(n)$. For a natural number $j$, let $H(j)$ be the number of non-zero digits in the binary expansion of $j$ (Hamming weight) and make the following observation: starting with the graph with vertex set $V=\{v_0,v_1,\dots,v_{k-1}\}$, if the single vertex $v_k$ is added the number of edges increases by $H(k)$, i.e., $\nabla T(k)=T(k+1)-T(k)=H(k)$. To see this, note that if we replace a single 1 in $v_k$ by 0 we obtain $v_j\in V$, whereas if we replace a single 0 in $v_k$ by 1 we obtain $v_j\notin V$. We then have $T(n)=T(n)-T(1)=\sum_{k=1}^{n-1} \nabla T(k)=\sum_{k=0}^{n-1} H(k)$. We will use this to obtain an exact expression for $T(n)$.

\begin{thm}
\label{thm: count}
(a) $T(2^d)=d 2^{d-1}$ for $d\in \mathbb{N}$.\\[2mm]
(b) More generally, if $n=\sum_{j=1}^t 2^{k_j}$, where $k_1>k_2>\cdots>k_t\geq 0$, then $T(n)=\sum_{j=1}^t (k_j 2^{k_j-1} +(j-1) 2^{k_j})$.\\[2mm]
(c) For all $n\in \mathbb{N}$, $n(\ceil{\log n}-1)/4 < T(n) < n\ceil{\log n}$, where $\log$ is the base 2 logarithm.
\end{thm}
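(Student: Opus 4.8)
The plan is to take as given the identity $T(n)=\sum_{k=0}^{n-1}H(k)$ established in the paragraph preceding the theorem, and to reduce all three parts to elementary counting of binary digits, proving (a) first so it can feed into (b), and then deriving (c) from (a) together with monotonicity of $T$.

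For part (a) I would argue that the integers $0,1,\dots,2^d-1$ are precisely the $2^d$ binary strings of length $d$, and that for each of the $d$ digit positions the digit equals $1$ in exactly half of these strings, i.e. in $2^{d-1}$ of them. Summing the contribution of each position gives $\sum_{k=0}^{2^d-1}H(k)=d\,2^{d-1}=T(2^d)$.

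For part (b) I would decompose the range $[0,n)$ along the greedy binary expansion of $n$. Set $n_0=0$ and $n_j=\sum_{i=1}^{j}2^{k_i}$ for $1\le j\le t$, so $n_t=n$ and $[0,n)$ is the disjoint union of the blocks $[n_{j-1},n_j)$. Every $k$ in the $j$-th block has the form $k=n_{j-1}+m$ with $0\le m<2^{k_j}$; since all the nonzero binary digits of $n_{j-1}$ lie in positions $k_1>\dots>k_{j-1}$, all strictly above position $k_j$, the addition $n_{j-1}+m$ is carry-free, so $H(k)=H(n_{j-1})+H(m)=(j-1)+H(m)$. Hence the $j$-th block contributes $\sum_{m=0}^{2^{k_j}-1}\big((j-1)+H(m)\big)=(j-1)2^{k_j}+k_j2^{k_j-1}$, the last term by part (a); summing over $j$ yields the stated formula, and part (a) is the case $t=1$, $k_1=d$. (An equivalent route is induction on $t$, peeling off the top block $[0,2^{k_1})$ and using the carry-free shift $H(2^{k_1}+m)=1+H(m)$ for $m<2^{k_1}$.)

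For part (c) I would use only part (a) and the fact that $\nabla T(k)=H(k)\ge 0$, so $T(\cdot)$ is non-decreasing; we may assume $n\ge 2$, the case $n=1$ being degenerate since $T(1)=0$. Put $d=\ceil{\log n}$, so $2^{d-1}<n\le 2^d$. Monotonicity and (a) give $T(n)\le T(2^d)=d\,2^{d-1}<dn$, strictly because $2^{d-1}<n$ and $d\ge 1$. For the lower bound, $n\ge 2^{d-1}+1$, so monotonicity and the increment formula give $T(n)\ge T(2^{d-1}+1)=T(2^{d-1})+H(2^{d-1})=(d-1)2^{d-2}+1>(d-1)2^{d-2}\ge n(d-1)/4$, the last inequality because $n\le 2^d$. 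Together these give $n(\ceil{\log n}-1)/4<T(n)<n\ceil{\log n}$, whence $T(n)=\Theta(n\log n)$.

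The computational content is entirely routine; the only genuine point is the no-carry observation in (b) that makes $H$ additive across the blocks, and the small amount of care needed to keep all the inequalities in (c) strict — which works precisely because the $k=0$ term contributes $H(0)=0$, giving the ``$+1$'' of slack in the lower estimate, and because $n>2^{d-1}$ is a strict inequality. I do not expect any real obstacle here.
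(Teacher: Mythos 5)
Your proof is correct and follows essentially the same route as the paper's: the same block decomposition of $\sum_{k=0}^{n-1}H(k)$ along the binary expansion of $n$ (your $n_j$ are the paper's $s_j$, with the same carry-free observation $H(n_{j-1}+m)=(j-1)+H(m)$) for part (b), and the same monotonicity-plus-part-(a) sandwich $T(2^{d-1})<T(n)\leq T(2^d)$ for part (c). The only cosmetic difference is in (a), where you count ones digit-by-digit across $\{0,\dots,2^d-1\}$ while the paper applies the handshaking lemma to the full hypercube graph --- two phrasings of the same count.
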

\begin{proof}

(a) If $n=2^d$, we note that each vertex in the $d-$dimensional unit hypercube has degree $d$, so the sum of degrees over all vertices is $d 2^d$, then $T(n)=d 2^{d-1}$ follows from the handshaking theorem.\\

(b) For $2^{d-1} < n \leq 2^d$ and $n=\sum_{j=1}^t 2^{k_j}$, where $k_1>k_2>\cdots>k_t\geq 0$, define $s_r=\sum_{j=1}^r 2^{k_j}$. We have:
\begin{eqnarray*}
T(n)=\sum_{j=0}^{n-1} H(j) &=& \sum_{j=0}^{s_1-1}H(j) + \sum_{j=s_1}^{s_2-1} H(j) + \cdots + \sum_{j=s_{t-1}}^{s_{t}-1} H(j)
\end{eqnarray*}
We now notice that for $s_{r-1} \leq j \leq s_{r}-1$, $H(j)=r-1+H(j-s_{r-1})$ so
$$
\sum_{j=s_{r-1}}^{s_{r}-1} H(j)=\sum_{j=s_{r-1}}^{s_{r}-1} (r-1+H(j-s_{r-1}))=\sum_{m=0}^{2^{k_{r}}-1} (r-1+H(m))=(r-1) 2^{k_{r}} + k_{r} 2^{k_{r}-1}
$$
Here we used that $T(2^k)=\sum_{m=0}^{2^k-1} H(m)=k2^{k-1}$ by part (a).

Then,
\begin{eqnarray*}
T(n) &=& \sum_{j=0}^{s_1-1}H(j) + \sum_{j=s_1}^{s_2-1} H(j) + \cdots + \sum_{j=s_{t-1}}^{s_t-1} H(j)\\
&=& k_1 2^{k_1-1} + 2^{k_2} + k_2 2^{k_2-1} + 2 2^{k_3} + k_3 2^{k_3-1} +\cdots + (t-1)2^{k_t} + k_t 2^{k_t-1}\\
&=& \sum_{j=1}^t (k_j 2^{k_j-1} +(j-1) 2^{k_j}).
\end{eqnarray*}

(c) Let $d$ be an integer. Note that $d=\ceil{\log n} \Leftrightarrow d-1 < \log n \leq d \Leftrightarrow 2^{d-1} < n \leq 2^{d}$. Then since $T(n)$ is clearly strictly monotonically increasing we have $(\ceil{\log n}-1) n/4\leq (d-1)2^{d-2}=T(2^{d-1})<T(n) \leq T(2^d)=d 2^{d-1} < n \ceil{\log n}$.

\end{proof}

\end{document}